\newtheorem{theorem}{Theorem}
\newtheorem{corollary}[theorem]{Corollary}
\newtheorem{definition}{Definition}
\newtheorem{example}{Example}
\newtheorem{lemma}[theorem]{Lemma}
\newtheorem{proposition}[theorem]{Proposition}
\newtheorem{remark}{Remark}
\newenvironment{proof}[1][Proof]{\textbf{#1.} }{\ \rule{0.5em}{0.5em}}
\begin{document}

\title{A new approach to temperate generalized functions}
\author{A. Delcroix\\Equipe Analyse Alg\'{e}brique Non Lin\'{e}aire\\\textit{Laboratoire Analyse, Optimisation, Contr\^{o}le}\\Facult\'{e} des sciences - Universit\'{e} des Antilles et de la Guyane\\97159\ Pointe-\`{a}-Pitre Cedex Guadeloupe}
\maketitle

\begin{abstract}
A new approach to the algebra $\mathcal{G}_{\tau}$ of temperate nonlinear
generalized functions is proposed, in which $\mathcal{G}_{\tau}$ is based on
the space $\mathcal{O}_{M}$ endowed with is natural topology in contrary to
previous constructions.\ Thus, this construction fits perfectly in the general
scheme of construction of Colombeau type algebras and reveals better
properties of $\mathcal{G}_{\tau}$.\ This is illustrated by the natural
introduction of a regularity theory in $\mathcal{G}_{\tau}$, of the Fourier
transform, with the definition of $\mathcal{G}_{\mathcal{O}_{C}^{\prime}}$,
the space of rapidly generalized distributions which is the Fourier image of
$\mathcal{G}_{\tau}$.

\end{abstract}

\noindent\textbf{Mathematics Subject Classification (2000): 46F30, 46F05,
46E10, 42A38.}\smallskip

\noindent\textbf{Keywords:} Colombeau generalized functions, Colombeau
temperate generalized functions, Rapidly decreasing functions, Schwartz
distributions, Rapidly decreasing distributions, Temperate distributions,
Fourier Transform.\smallskip%

%TCIMACRO{\TeXButton{Compteurequation}{\setcounter{equation}{2}}}%
%BeginExpansion
\setcounter{equation}{2}%
%EndExpansion

\section{Introduction}

The theory of generalized functions is nowadays well established. Many
applications have been carried out in various fields of mathematics such as
partial differential equations, Lie analysis, local and microlocal analysis,
probability theory, differential geometry. (See for examples the monographies
\cite{Col1,Col2,GKOS,NePiSc,Ober1} and the references therein.)\medskip

This paper develop some remarks about a new approach to temperate generalized
functions.\ In order to justify the introduction of this new construction, we
first recall the main types of special (or simplified) algebras of generalized
based on spaces of smooth functions considered in the literature.

The original simplified \emph{Colombeau algebra of generalized functions}
$\mathcal{G}$ is based on the space $\mathcal{E}=\mathrm{C}^{\infty}$ of
smooth functions and contains the space of Schwartz distributions as a
subvector space \cite{Col1,GKOS,NePiSc,Ober1,Scarpa2}.\ The duality in the
background of this construction is, of course, $\left(  \mathcal{D}%
,\mathcal{D}^{\prime}\right)  $. As all spaces considered in the sequel,
$\mathcal{G}$ is a factor space of moderate nets modulo negligible ones, the
moderateness and the negligibility being given by the asymptotic behavior of
the nets with respect to an asymptotic scale. When an algebra containing the
space of tempered distributions is needed, the so-called \emph{algebra of
temperate generalized functions} $\mathcal{G}_{\tau}$
\cite{Col1,GKOS,Scarpa1,Scarpa2}, based on the space $\mathcal{O}_{M}$ of
slowly increasing smooth functions, is considered. The duality is in this case
$\left(  \mathcal{S},\mathcal{S}^{\prime}\right)  $. Note that this
construction is not, at first sight, related to the topology of $\mathcal{O}%
_{M}$. Finally, an algebra based on the space $\mathcal{S}$ of rapidly
decreasing functions has also been considered \cite{ADRapDec,Scarpa1,Scarpa3},
with applications (for example) in the field of pseudo differential operators
\cite{Gar,GaGrOb} or of microlocal analysis of generalized functions
\cite{ADRapDec,HorDeH,HorKun,Scarpa3}. This algebra $\mathcal{G}_{\mathcal{S}%
}$ of \emph{rapidly decreasing generalized functions} contains as a linear
subspace $\mathcal{O}_{C}^{\prime}$, the space of rapidly decreasing distributions.

The first and the last constructions are based on the natural topology of the
underlying space, which can be described by (countable) families of
semi-norms. We propose here a new version of the construction of temperate
generalized functions based on the usual topology of $\mathcal{O}_{M}$, which
therefore fits in the general scheme of construction of Colombeau type
algebras. The prize to be paid is the non countability of the family of
semi-norms defining the topology of $\mathcal{O}_{M}$.\medskip

The paper is organized as follows. Section \ref{DSHCsec2} is devoted to a
short presentation of the construction of the spaces of Colombeau type
generalized functions and of the examples quoted above. In Section
\ref{DSHCsec3}, we briefly recall the construction of the classical space of
temperate generalized functions $\mathcal{G}_{\tau}$, develop the
new\ construction and show that it leads to the same space. In Section
\ref{DSHCsec4}, we turn to the definition of the Fourier transform of elements
of $\mathcal{G}_{\tau}$.\ Using the classical theorem asserting that the
Fourier image of $\mathcal{O}_{M}$ is $\mathcal{O}_{C}^{\prime}$, we introduce
the (new) space $\mathcal{G}_{\mathcal{O}_{C}^{\prime}}$ of rapidly
generalized distributions which is the Fourier image of $\mathcal{G}_{\tau}$.
Of course, this Fourier transform will share the classical expected
properties.\ Finally, in Section \ref{DSHCsec5}, we introduce the subspace
$\mathcal{G}_{\tau}^{\infty}$ of regular elements of $\mathcal{G}_{\tau}$ and
show that the result $\mathcal{G}_{\tau}^{\infty}\cap\mathcal{S}^{\prime
}=\mathcal{O}_{M}$ holds in the spirit of the more classical one
$\mathcal{G}^{\infty}\cap\mathcal{D}^{\prime}=\mathrm{C}^{\infty}$
\cite{Ober1}. (More generally, we could have introduced the notion of
$\mathcal{R}$-regularity \cite{ADRapDec}.) We also show, in the spirit of
\cite{JAM4LocAnal}, that some subspaces of $\mathcal{G}$ of regular temperate
elements can be considered leading to the corresponding local analysis of
elements of $\mathcal{G}$.

\section{Simplified or special algebras of generalized functions
\label{DSHCsec2}}

\subsection{Colombeau type algebras based on locally convex
algebras\label{DSHCSubSec2.1}}

Let $d$ be an integer and denote by $\mathbb{K}$ the field of real or complex
numbers. Let $E\left(  \cdot\right)  $ be a presheaf (resp. sheaf) of
$\mathbb{K}$-topological algebras of $\mathbb{K}$ valued functions over
$\mathbb{R}^{d}$.\ (Thus, the presheaf restriction operator is the usual
restriction of $\mathbb{K}$ valued functions.)

Suppose that, for any open set $\Omega$ in $\mathbb{R}^{d}$, the topology of
$E(\Omega)$ can be described by a family $\mathcal{P}(\Omega)=(p_{i})_{i\in
I(\Omega)}$ of semi-norms verifying:
\[
\forall i\in I(\Omega),\ \exists(j,k,C)\in I(\Omega)\times I(\Omega
)\times\mathbb{R}_{+}^{\ast}:\forall f,g\in E(\Omega),\ \ p_{i}(fg)\leq
Cp_{j}(f)p_{k}(g).
\]
Set
\begin{align*}
\mathcal{M}_{(E,\mathcal{P})}(\Omega)  &  =\left\{  \left(  f_{\varepsilon
}\right)  _{\varepsilon}\in E(\Omega)^{\left(  0,1\right]  }\,\left\vert
\,\forall i\in I,\;\exists m\in\mathbb{N}:\ p_{i}\left(  f_{\varepsilon
}\right)  =\mathrm{o}\left(  \varepsilon^{-m}\right)  \;\mathrm{as}%
\;\varepsilon\rightarrow0\right.  \right\}  ,\\
\mathcal{N}_{(E,\mathcal{P})}(\Omega)  &  =\left\{  \left(  f_{\varepsilon
}\right)  _{\varepsilon}\in E(\Omega)^{\left(  0,1\right]  }\,\left\vert
\,\forall i\in I,\;\forall m\in\mathbb{N}:\ p_{i}\left(  f_{\varepsilon
}\right)  =\mathrm{o}\left(  \varepsilon^{m}\right)  \;\mathrm{as}%
\;\varepsilon\rightarrow0\right.  \right\}  .
\end{align*}
(The letter $\mathcal{M}$ (resp. $\mathcal{N}$) stands for moderate (resp.
negligible). In the sequel, we shall omit the precision "$\mathrm{as}%
\;\varepsilon\rightarrow0$.")\smallskip

From \cite{JAM1}, it follows that:

\begin{proposition}
\label{GOMNELLEP1}$~$\newline$\left(  a\right)  $~Suppose that the following
assertion holds:\newline\noindent$\left(  1\right)  $~ For any $\Omega_{1}$
and $\Omega_{2}$, open subsets of $\mathbb{R}^{d}$ with $\Omega_{1}\subset$
$\Omega_{2}$, we have $I(\Omega_{1})\subset I(\Omega_{2})$.\ Moreover, if
$\rho_{1}^{2}$ is the restriction operator $E(\Omega_{2})\rightarrow
E(\Omega_{1})$, then, for each $p_{i}\in\mathcal{P}(\Omega_{1})$, the
semi-norm $\widetilde{p}_{i}=p_{i}\circ\rho_{1}^{2}$ extends $p_{i}$ to
$\mathcal{P}(\Omega_{2}).$\newline Then $\mathcal{M}_{(E,\mathcal{P})}(\cdot)$
is a presheaf of $\mathbb{K}$-algebras and $\mathcal{N}_{(E,\mathcal{P}%
)}(\cdot)$ \textit{a presheaf of ideals of }$\mathcal{M}_{(E,\mathcal{P}%
)}(\cdot)$.\smallskip\newline$\left(  b\right)  $~Suppose that $E(\cdot)$ is a
sheaf of $\mathbb{K}$-topological algebras and that assumption (1) and the
following hold:\newline\noindent$\left(  2\right)  $~For any family
$(\Omega_{h})_{h\in H}$ of open sets in $\mathbb{R}^{d}$ with $\Omega
=\underset{h\in H}{\cup}\Omega_{h}$ and for any $p_{i}\in\mathcal{P}(\Omega)$,
there exist a finite subfamily $\left(  \Omega_{j}\right)  _{1\leq j\leq
n(i)}$ and corresponding semi-norms $p_{j}\in\mathcal{P}(\Omega_{j})$ such
that, for any $u\in E(\Omega)$,%
\[
p_{i}\left(  u\right)  \leq C\max_{1\leq j\leq n(i)}p_{j}(u\left\vert
_{\Omega_{j}}\right.  ),\ \ \ C>0.
\]
Then $\mathcal{M}_{(E,\mathcal{P})}(\cdot)$ is a sheaf of $\mathbb{K}%
$-algebras and $\mathcal{N}_{(E,\mathcal{P})}(\cdot)$ \textit{a sheaf of
ideals of }$\mathcal{M}_{(E,\mathcal{P})}(\cdot)$.
\end{proposition}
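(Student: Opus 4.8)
The plan is to prove the two parts in turn, first establishing the pointwise algebraic structure and the presheaf axioms, and then upgrading to sheaves under the extra hypothesis.

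\textbf{Part (a).} First I would fix an open set $\Omega$ and check that $\mathcal{M}_{(E,\mathcal{P})}(\Omega)$ is a subalgebra of $E(\Omega)^{(0,1]}$ and that $\mathcal{N}_{(E,\mathcal{P})}(\Omega)$ is an ideal of it. Stability under $\mathbb{K}$-linear combinations is immediate from the subadditivity and homogeneity of each seminorm, together with the fact that the maximum of two exponents is again an exponent; stability under products uses the submultiplicativity hypothesis $p_i(fg) \le C p_j(f) p_k(g)$, so that the product of an $\mathrm{o}(\varepsilon^{-m_1})$ bound and an $\mathrm{o}(\varepsilon^{-m_2})$ bound is $\mathrm{o}(\varepsilon^{-(m_1+m_2)})$. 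The same estimate applied with one factor negligible and one moderate shows that $\mathcal{N}(\Omega)$ absorbs $\mathcal{M}(\Omega)$, while the inclusion $\mathcal{N}(\Omega)\subset\mathcal{M}(\Omega)$ is clear. To obtain the presheaf structure I would invoke hypothesis $(1)$: since $I(\Omega_1)\subset I(\Omega_2)$ and the restricted seminorm $\tilde p_i = p_i \circ \rho_1^2$ is the seminorm indexed by $i$ in $\mathcal{P}(\Omega_2)$, one has $p_i(u_\varepsilon|_{\Omega_1}) = \tilde p_i(u_\varepsilon)$ for every $i\in I(\Omega_1)$. Hence every moderateness (resp. negligibility) bound valid on $\Omega_2$ restricts verbatim to $\Omega_1$, so componentwise restriction of nets maps $\mathcal{M}(\Omega_2)$ into $\mathcal{M}(\Omega_1)$ and $\mathcal{N}(\Omega_2)$ into $\mathcal{N}(\Omega_1)$. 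The functoriality identities are inherited from those of $E(\cdot)$ because restriction acts componentwise.

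\textbf{Part (b).} Now let $\Omega = \bigcup_{h\in H}\Omega_h$ be an open cover. The separation axiom is inherited directly from $E$: if $(u_\varepsilon)\in\mathcal{M}(\Omega)$ restricts to the zero net on each $\Omega_h$, then $u_\varepsilon = 0$ in $E(\Omega)$ for every $\varepsilon$ because $E$ is a sheaf, whence $(u_\varepsilon)=0$. For gluing, I would start from a compatible family $(u_\varepsilon^{(h)})_h$ with $u_\varepsilon^{(h)}\in\mathcal{M}(\Omega_h)$ agreeing on all overlaps for every $\varepsilon$. Applying the sheaf property of $E$ at each fixed $\varepsilon$ produces a unique $u_\varepsilon\in E(\Omega)$ with $u_\varepsilon|_{\Omega_h}=u_\varepsilon^{(h)}$, hence a net $(u_\varepsilon)\in E(\Omega)^{(0,1]}$ restricting to the given data. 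It then remains only to verify that this candidate section is moderate.

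This last verification is where hypothesis $(2)$ — and the finiteness it guarantees — is decisive, and I expect it to be the crux of the argument. Fix $i\in I(\Omega)$. By $(2)$ there is a finite subfamily $\Omega_1,\dots,\Omega_{n(i)}$ of the cover and seminorms $p_j\in\mathcal{P}(\Omega_j)$ with $p_i(u_\varepsilon)\le C\max_{1\le j\le n(i)}p_j(u_\varepsilon|_{\Omega_j})$. Each $u_\varepsilon|_{\Omega_j}$ equals one of the given $u_\varepsilon^{(h)}$, hence is moderate, giving an exponent $m_j$ with $p_j(u_\varepsilon|_{\Omega_j})=\mathrm{o}(\varepsilon^{-m_j})$. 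Because the subfamily is \emph{finite}, $m:=\max_j m_j$ is a well-defined integer and $p_i(u_\varepsilon)=\mathrm{o}(\varepsilon^{-m})$, so $(u_\varepsilon)\in\mathcal{M}(\Omega)$. The potential failure without finiteness is precisely that the $m_j$ could be unbounded over an infinite cover, yielding no single moderateness exponent; this is why the countability issue flagged in the introduction is not a problem here, whereas the finiteness in $(2)$ is essential. The identical computation, with $\mathrm{o}(\varepsilon^{-m_j})$ replaced by $\mathrm{o}(\varepsilon^{m})$ for an arbitrary fixed $m$, shows that a glued family of negligible nets is negligible, so $\mathcal{N}(\cdot)$ glues as well. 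Combined with Part (a), this gives that $\mathcal{M}_{(E,\mathcal{P})}(\cdot)$ is a sheaf of $\mathbb{K}$-algebras and $\mathcal{N}_{(E,\mathcal{P})}(\cdot)$ a sheaf of ideals of it.
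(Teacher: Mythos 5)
Your proof is correct. Note that the paper itself offers no proof of this proposition — it is quoted from \cite{JAM1} — and your argument is exactly the standard one that establishes it: part (a) from seminorm subadditivity, the submultiplicativity hypothesis, and assumption $(1)$ to restrict estimates; part (b) by gluing componentwise in $E$ at each fixed $\varepsilon$ and then using the finiteness in assumption $(2)$ to extract a single moderateness exponent $m=\max_j m_j$ (resp. a uniform negligibility bound), which is indeed the crux, so there is nothing to fault or to contrast.
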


\begin{definition}
\label{DSHCDefCTA}For any $\Omega$ open subset of $\mathbb{R}^{d}$, the\emph{
Colombeau type algebra associated} to $E\left(  \Omega\right)  $ is the factor
algebra%
\[
\mathcal{G}\left(  \Omega\right)  =\mathcal{M}_{(E,\mathcal{P})}%
(\cdot)/\mathcal{N}_{(E,\mathcal{P})}(\cdot).
\]

\end{definition}

\begin{proposition}
\label{GOMNELLEP2}\cite{Col1,JAM1}$~$\newline$\left(  a\right)  $~Under
assumption $(1)$, $\mathcal{G}\left(  \cdot\right)  $ is a presheaf of
algebras.\smallskip\newline$\left(  b\right)  $~In addition, suppose that
assumption $(2)$ is fulfilled. Then, the localization principle $\left(
F_{1}\right)  $ holds for $\mathcal{G}\left(  \cdot\right)  $:\newline$\left(
F_{1}\right)  $~Let $(\Omega_{h})_{h\in H}$ be a family of open sets in
$\mathbb{R}^{d}$ with $\Omega=\cup_{h\in H}\Omega_{h}$. Consider
$u,v\in\mathcal{G}(\Omega)$ such that all restrictions $u\left\vert
_{\Omega_{h}}\right.  $ and $v\left\vert _{\Omega_{h}}\right.  $ $\left(  h\in
H\right)  $ coincide.\ Then $u=v$.\smallskip\newline$\left(  c\right)
$~Moreover, if $E\left(  \cdot\right)  $ is a fine sheaf of algebras,
$\mathcal{G}\left(  \cdot\right)  $ is also a fine sheaf of algebras.
\end{proposition}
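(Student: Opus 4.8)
The plan is to deduce all three assertions from Proposition~\ref{GOMNELLEP1}, which already supplies the presheaf (resp.\ sheaf) structure of $\mathcal{M}_{(E,\mathcal{P})}(\cdot)$ and the ideal structure of $\mathcal{N}_{(E,\mathcal{P})}(\cdot)$, together with the compatibility estimate of assumption~$(2)$. For part~$(a)$, under assumption~$(1)$ I would define the restriction $\mathcal{G}(\Omega_{2})\rightarrow\mathcal{G}(\Omega_{1})$ by $[(f_{\varepsilon})]\mapsto[(f_{\varepsilon}|_{\Omega_{1}})]$ and check that it is well defined: since $\mathcal{N}_{(E,\mathcal{P})}(\cdot)$ is a presheaf of ideals, the restriction of a negligible net is again negligible, so the map passes to the quotient; functoriality under composition of restrictions is inherited from $\mathcal{M}_{(E,\mathcal{P})}(\cdot)$. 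This step is routine.

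For part~$(b)$, I would put $w=u-v$ and reduce the claim to the following statement: if a representative $(w_{\varepsilon})_{\varepsilon}$ satisfies $(w_{\varepsilon}|_{\Omega_{h}})_{\varepsilon}\in\mathcal{N}_{(E,\mathcal{P})}(\Omega_{h})$ for every $h\in H$, then $(w_{\varepsilon})_{\varepsilon}\in\mathcal{N}_{(E,\mathcal{P})}(\Omega)$. Fixing $i\in I(\Omega)$ and $m\in\mathbb{N}$, assumption~$(2)$ provides a finite subfamily $(\Omega_{j})_{1\leq j\leq n(i)}$ of the cover and seminorms $p_{j}\in\mathcal{P}(\Omega_{j})$ with $p_{i}(w_{\varepsilon})\leq C\max_{1\leq j\leq n(i)}p_{j}(w_{\varepsilon}|_{\Omega_{j}})$. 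Each $\Omega_{j}$ is a member of the cover, so $p_{j}(w_{\varepsilon}|_{\Omega_{j}})=\mathrm{o}(\varepsilon^{m})$, and a finite maximum of such quantities is again $\mathrm{o}(\varepsilon^{m})$; hence $p_{i}(w_{\varepsilon})=\mathrm{o}(\varepsilon^{m})$. As $i$ and $m$ are arbitrary, $(w_{\varepsilon})_{\varepsilon}\in\mathcal{N}_{(E,\mathcal{P})}(\Omega)$, that is $u=v$.

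For part~$(c)$, I would exploit the fineness of $E(\cdot)$: to any locally finite open cover $(\Omega_{h})_{h\in H}$ of $\Omega$ it associates a partition of unity $(\phi_{h})_{h\in H}$ in $E(\Omega)$ with $\mathrm{supp}\,\phi_{h}\subset\Omega_{h}$ and $\sum_{h}\phi_{h}=1$. Two points must be established. First, the gluing axiom: given compatible sections $u_{h}\in\mathcal{G}(\Omega_{h})$ with representatives $(u_{h,\varepsilon})_{\varepsilon}$, I would set $u_{\varepsilon}=\sum_{h}\phi_{h}\,u_{h,\varepsilon}$, each summand extended by zero off $\Omega_{h}$, so that local finiteness makes $u_{\varepsilon}\in E(\Omega)$. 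Moderateness of $(u_{\varepsilon})_{\varepsilon}$ would again follow from assumption~$(2)$, since on each $\Omega_{j}$ the sum is finite and each term is moderate ($\phi_{h}$ being a fixed, hence moderate, constant net and $\mathcal{M}_{(E,\mathcal{P})}(\cdot)$ an algebra). The identity $u|_{\Omega_{k}}=u_{k}$ would follow from $\sum_{h}\phi_{h}=1$, since on $\Omega_{k}$ the difference equals the finite sum $\sum_{h}\phi_{h}(u_{h,\varepsilon}-u_{k,\varepsilon})$, whose terms live on $\Omega_{h}\cap\Omega_{k}$, where $u_{h}$ and $u_{k}$ coincide and the bracket is therefore negligible. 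Second, the fineness of $\mathcal{G}(\cdot)$: multiplication by the class of the constant net $\phi_{h}$ defines an endomorphism $\eta_{h}$ of $\mathcal{G}(\cdot)$, well defined because $\mathcal{M}_{(E,\mathcal{P})}(\cdot)$ is an algebra and $\mathcal{N}_{(E,\mathcal{P})}(\cdot)$ an ideal, and one has $\sum_{h}\eta_{h}=\mathrm{id}$ with $\mathrm{supp}\,\eta_{h}\subset\Omega_{h}$, which is exactly the fineness property.

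The main obstacle lies in part~$(c)$, and precisely in the gluing step. Part~$(a)$ and the separation principle $(F_{1})$ of part~$(b)$ are essentially immediate once assumption~$(2)$ and Proposition~\ref{GOMNELLEP1} are available; but checking that the glued net $(u_{\varepsilon})_{\varepsilon}$ is moderate, and that the patching error is negligible on each piece of the cover, forces one to control the global seminorms $p_{i}$ through the finitely many local ones while using the support condition $\mathrm{supp}\,\phi_{h}\subset\Omega_{h}$ to confine the error to the overlaps $\Omega_{h}\cap\Omega_{k}$. This is also the point where fineness, rather than the bare sheaf property of $E(\cdot)$, is indispensable: it is the partition of unity that upgrades the quotient presheaf from merely separated to a genuine, and then fine, sheaf.
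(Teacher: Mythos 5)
First, a point of reference: the paper gives no proof of this proposition at all — it is quoted from \cite{Col1,JAM1} — so your argument can only be judged on its own merits and against the standard proofs in those references. Parts (a) and (b) of your proposal are correct and are exactly the expected arguments: under assumption (1) restriction descends to the quotient because $\widetilde{p}_{i}=p_{i}\circ\rho_{1}^{2}$ lies in $\mathcal{P}(\Omega_{2})$, so restrictions of moderate (resp.\ negligible) nets are moderate (resp.\ negligible); and for $(F_{1})$, applying the estimate of assumption (2) to $w_{\varepsilon}=u_{\varepsilon}-v_{\varepsilon}$ transfers negligibility from the finitely many $\Omega_{j}$ to $\Omega$, since a finite maximum of $\mathrm{o}(\varepsilon^{m})$ terms is $\mathrm{o}(\varepsilon^{m})$.

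Part (c), however, contains a genuine gap, located exactly where you placed the main difficulty. Your moderateness argument for the glued net $u_{\varepsilon}=\sum_{h}\phi_{h}u_{h,\varepsilon}$ rests on the claim that ``on each $\Omega_{j}$ the sum is finite''. This is not what local finiteness gives: the $\Omega_{j}$ furnished by assumption (2) are members of the original cover, and a member of a locally finite cover can meet the supports of infinitely many $\phi_{h}$ (local finiteness says every \emph{point} has some neighborhood meeting finitely many supports; it says nothing about the large open sets $\Omega_{j}$ themselves). The same defect affects your verification of $u|_{\Omega_{k}}=u_{k}$: the error $\sum_{h}\phi_{h}(u_{h,\varepsilon}-u_{k,\varepsilon})$ is in general an infinite, merely locally finite, sum on $\Omega_{k}$, and neither moderateness nor negligibility survives infinite summation, because the exponents $m_{h}$ and the thresholds $\varepsilon_{h}$ can degenerate with $h$. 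The repair stays inside your strategy but needs one extra move: apply assumption (2) not to the cover $(\Omega_{h})_{h}$ but to the finer cover $(N_{x})_{x\in\Omega}$, where $N_{x}$ is a neighborhood of $x$ meeting only finitely many $\operatorname{supp}\phi_{h}$. On each $N_{x}$ the sum defining $u_{\varepsilon}$ (resp.\ the patching error) is genuinely finite; each single term $\phi_{h}u_{h,\varepsilon}$, extended by zero, is moderate (resp.\ each $\phi_{h}(u_{h,\varepsilon}-u_{k,\varepsilon})$ is negligible) by a separate application of (2) to the two-set cover $\{\Omega_{h},\,\Omega\setminus\operatorname{supp}\phi_{h}\}$, combined with the algebra/ideal structure of Proposition \ref{GOMNELLEP1}; then assumption (2) for the cover $(N_{x})_{x}$ converts these finitely many local estimates into the global one. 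Two smaller points you should also address: the sheaf axioms concern arbitrary covers, so you need paracompactness of open subsets of $\mathbb{R}^{d}$ to refine to a locally finite cover; and fineness of $E(\cdot)$ in the technical sense provides sheaf endomorphisms, not automatically a partition of unity by sections $\phi_{h}\in E(\Omega)$ — for the function sheaves considered here the two coincide, but your construction multiplies by the $\phi_{h}$, so the definition you are using should be made explicit.
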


There is a natural presheaf (resp. sheaf) embedding of $E\left(  \cdot\right)
$ into $\mathcal{G}\left(  \cdot\right)  $ defined by%
\begin{equation}
\sigma_{E,\mathcal{G}}\left(  \Omega\right)  :E\left(  \Omega\right)
\rightarrow\mathcal{G}\left(  \Omega\right)  ,\ \ \ f\mapsto\left(  f\right)
_{\varepsilon}+\mathcal{N}_{(E,\mathcal{P})}(\Omega). \label{GOMNELLEF1}%
\end{equation}
The presheaf (resp. sheaf) $\mathcal{G}\left(  \cdot\right)  $ turns to be a
presheaf (resp. sheaf) of modules on the factor ring $\widetilde{\mathbb{C}%
}=\mathcal{M}\left(  \mathbb{C}\right)  /\mathcal{N}\left(  \mathbb{C}\right)
$ with
\begin{align*}
\mathcal{M}\left(  \mathbb{K}\right)   &  =\left\{  \left(  r_{\varepsilon
}\right)  _{\varepsilon}\in\mathbb{K}^{\left(  0,1\right]  }\,\left\vert
\,\exists m\in\mathbb{N}:\ \left\vert r_{\varepsilon}\right\vert
=\mathrm{o}\left(  \varepsilon^{-m}\right)  \right.  \right\}  ,\\
\mathcal{N}\left(  \mathbb{K}\right)   &  =\left\{  \left(  r_{\varepsilon
}\right)  _{\varepsilon}\in\mathbb{K}^{\left(  0,1\right]  }\,\left\vert
\,\forall m\in\mathbb{N}:\ \left\vert r_{\varepsilon}\right\vert
=\mathrm{o}\left(  \varepsilon^{m}\right)  \right.  \right\}  ,
\end{align*}
with $\mathbb{K}=\mathbb{C}$ or $\mathbb{K}=\mathbb{R},\;\mathbb{R}_{+}$.
Moreover, for the cases under consideration in this paper, $E\left(
\cdot\right)  $ is a subpresheaf (resp. subsheaf) of the sheaf $\mathrm{C}%
^{\infty}\left(  \cdot\right)  $ of smooth functions.\ Then, one easily checks
that, for $\alpha\in\mathbb{N}^{d}$, a presheaf family of differential
operators $\partial^{\alpha}f$ is defined component-wise on $\mathcal{G}%
\left(  \Omega\right)  $ by%
\[
\partial^{\alpha}f=\left(  \partial^{\alpha}f_{\varepsilon}\right)
_{\varepsilon}+\mathcal{N}_{(E,\mathcal{P})}(\Omega)\ \ \mathrm{with}\ \left(
f_{\varepsilon}\right)  _{\varepsilon}\in f.
\]
The family of differential operators $\left(  \partial^{\alpha}\right)
_{\alpha\in\mathbb{N}^{d}}$ satisfies the usual rules (such as the Leibniz
rule) and $\mathcal{G}\left(  \cdot\right)  $ turns to be a presheaf (resp.
sheaf) of differential algebras. The embedding defined by (\ref{GOMNELLEF1})
turns to be an embedding of differential algebras.

\subsection{Examples}

\begin{example}
\label{DSHCexCinf}Take $E\left(  \cdot\right)  =\mathrm{C}^{\infty}\left(
\cdot\right)  $.\ For any $\Omega$ open subset of $\mathbb{R}^{d}$,
$\mathrm{C}^{\infty}\left(  \Omega\right)  $ is endowed with the family of
semi-norms $\mathcal{P}\left(  \Omega\right)  =\left(  p_{K,l}\right)
_{K\Subset\Omega,l\in\mathbb{N}}$ defined by
\[
p_{K,l}(f)=\sup_{\left\vert \alpha\right\vert \leq l,x\in K}\left\vert
\partial^{\alpha}f\left(  x\right)  \right\vert ,
\]
where the notation $K\Subset\Omega$ means that\ the set $K$ is a compact set
included in $\Omega$.\newline We set
\[
\mathcal{M}_{\mathrm{C}^{\infty}}(\cdot)=\mathcal{M}_{(\mathrm{C}^{\infty
},\mathcal{P})}(\cdot),\;\;\mathcal{N}_{\mathrm{C}^{\infty}}(\cdot
)=\mathcal{N}_{(\mathrm{C}^{\infty},\mathcal{P})}(\cdot).
\]
The sheaf $\mathcal{G}\left(  \cdot\right)  =\mathcal{M}_{\mathrm{C}^{\infty}%
}(\cdot)/\mathcal{N}_{\mathrm{C}^{\infty}}(\cdot)$ is the\textbf{\ sheaf of
special or simplified Colombeau algebras of generalized functions}
\cite{Col1,GKOS,Ober1,Scarpa2}.
\end{example}

\begin{example}
\label{DSHCexSob}Take for $E\left(  \cdot\right)  $ the presheaf $H^{\infty
}\left(  \cdot\right)  =\mathcal{D}_{\mathrm{L}_{2}}\left(  \cdot\right)  $,
with
\[
H^{\infty}\left(  \Omega\right)  =\cap_{m\in\mathbb{N}}H^{m}\left(
\Omega\right)  ,\;\;H^{m}\left(  \Omega\right)  =W^{m,2}\left(  \Omega\right)
.
\]
From Sobolev inequalities, it follows that $H^{\infty}\left(  \Omega\right)  $
is continuously embedded into \textrm{C}$^{\infty}\left(  \Omega\right)
$.\ We may suppose a priori that elements of $H^{\infty}\left(  \Omega\right)
$ are \textrm{C}$^{\infty}$. $H^{\infty}\left(  \Omega\right)  $ is endowed
with the family of norms $\mathcal{P}_{\mathrm{L}_{2}}\left(  \Omega\right)
=\left(  \left\Vert \cdot\right\Vert _{m,\Omega}\right)  _{m\in\mathbb{N}}$
$\ $defined by
\[
\left\Vert f\right\Vert _{m,\Omega}=\sup_{\left\vert \alpha\right\vert \leq
m}\left\Vert \partial^{\alpha}f\right\Vert _{L^{2}\left(  \Omega\right)  }.
\]
We set%
\[
\mathcal{M}_{H}(\cdot)=\mathcal{M}_{(H^{\infty},\mathcal{P}_{\mathrm{L}_{2}}%
)}(\cdot),\;\;\mathcal{N}_{H}(\cdot)=\mathcal{N}_{(H^{\infty},\mathcal{P}%
_{\mathrm{L}_{2}})}(\cdot).
\]
The \textbf{presheaf }$\mathcal{G}_{H}\left(  \cdot\right)  =\mathcal{M}%
_{H}(\cdot)/\mathcal{N}_{H}(\cdot)$ is a \textbf{presheaf of Sobolev
generalized functions}.
\end{example}

For the following example, we set for $f\in\mathrm{C}^{\infty}\left(
\Omega\right)  $, $r\in\mathbb{Z}$ and $l\in\mathbb{N}$,%
\[
p_{r,l}(f)=\sup_{x\in\Omega,\ \left\vert \alpha\right\vert \leq l}\left\langle
x\right\rangle ^{r}\left\vert \partial^{\alpha}f\left(  x\right)  \right\vert
\ \ \mathrm{with\ }\left\langle x\right\rangle =(1+\left\vert x\right\vert
^{2})^{1/2}.
\]

\begin{example}
\label{DSHCRapDec}Take $E\left(  \cdot\right)  =\mathcal{S}\left(
\cdot\right)  $, the presheaf of rapidly decreasing smooth functions.\ For any
$\Omega$ open subset of $\mathbb{R}^{d}$, the topology of $\mathcal{S}\left(
\Omega\right)  $ is described by the family of semi-norms $\mathcal{P}%
_{\mathcal{S}}\left(  \Omega\right)  =\left(  p_{q,l}\right)  _{\left(
q,l\right)  \in\mathbb{N}^{2}}$. We set
\[
\mathcal{M}_{\mathcal{S}}(\cdot)=\mathcal{M}_{(\mathcal{S},\mathcal{P}%
_{\mathcal{S}})}(\cdot),\;\;\mathcal{N}_{\mathcal{S}}(\cdot)=\mathcal{N}%
_{(\mathcal{S},\mathcal{P}_{\mathcal{S}})}(\cdot).
\]
The presheaf $\mathcal{G}_{\mathcal{S}}\left(  \cdot\right)  =\mathcal{M}%
_{\mathcal{S}}(\cdot)/\mathcal{N}_{\mathcal{S}}(\cdot)$ is the
\textbf{presheaf of algebras of rapidly decreasing generalized functions}
\cite{ADRapDec,Gar,GaGrOb,Scarpa1,Scarpa3}.
\end{example}

\begin{remark}
More general constructions can be given, for example if $E\left(
\Omega\right)  $ is a projective or inductive limit of topological algebras.
We refer the reader to \cite{DHPV4} for these cases.
\end{remark}

\subsection{Topology on $\mathcal{G}\left(  \cdot\right)  $%
\label{DSHCSubSec2.3}}

We follow \cite{DHPV4} and use the notations of Subsection \ref{DSHCSubSec2.1}%
. Set, for $\left(  f_{\varepsilon}\right)  _{\varepsilon},\,\left(
g_{\varepsilon}\right)  _{\varepsilon}\in E(\Omega)^{\left(  0,1\right]  }$
and $i\in I(\Omega)$,
\[
\left\Vert f_{\varepsilon}\right\Vert _{i}=\limsup_{\varepsilon\rightarrow
0}p_{i}\left(  f_{\varepsilon}\right)  ^{\left\vert \ln\varepsilon\right\vert
^{-1}}%
\]
and $d_{i}(f_{\varepsilon},g_{\varepsilon})=\left\Vert f_{\varepsilon
}-g_{\varepsilon}\right\Vert _{i}$. We get (Proposition-definition 2,
\cite{DHPV4})
\begin{align*}
\mathcal{M}_{(E,\mathcal{P})}(\Omega)  &  =\left\{  \left(  f_{\varepsilon
}\right)  _{\varepsilon}\in E(\Omega)^{\left(  0,1\right]  }\,\left\vert
\,\forall i\in I:\left\Vert f_{\varepsilon}\right\Vert _{i}<+\infty\right.
\right\}  ,\\
\mathcal{N}_{(E,\mathcal{P})}(\Omega)  &  =\left\{  \left(  f_{\varepsilon
}\right)  _{\varepsilon}\in E(\Omega)^{\left(  0,1\right]  }\,\left\vert
\,\forall i\in I:\ \left\Vert f_{\varepsilon}\right\Vert _{i}=0\right.
\right\}  .
\end{align*}
The family $(d_{i})_{i\in I(\Omega)}$ defines a family of ultrapseudometrics
on $\mathcal{M}_{(E,\mathcal{P})}(\Omega)$, inducing on $\mathcal{M}%
_{(E,\mathcal{P})}(\Omega)$ the structure of a topological ring such that the
intersection of neighborhoods of $0$ is equal to $\mathcal{N}_{(E,\mathcal{P}%
)}(\Omega)$. Thus, this topology transfers to the factor space $\mathcal{G}%
\left(  \Omega\right)  $ which turns to be a topological ring, and a
topological algebra other the factor ring $\widetilde{\mathbb{C}}%
=\mathcal{M}\left(  \mathbb{C}\right)  /\mathcal{N}\left(  \mathbb{C}\right)
$. (By setting $\left\Vert r_{\varepsilon}\right\Vert ^{\prime}=\limsup
_{\varepsilon\rightarrow0}\left\vert r_{\varepsilon}\right\vert ^{\left\vert
\ln\varepsilon\right\vert ^{-1}}$, one easily get that
\[
\mathcal{M}\left(  \mathbb{C}\right)  \text{ (resp. }\mathcal{N}\left(
\mathbb{C}\right)  \text{)}=\left\{  \left(  r_{\varepsilon}\right)
_{\varepsilon}\in\mathbb{K}^{\left(  0,1\right]  }\,\left\vert \,\left\Vert
r_{\varepsilon}\right\Vert ^{\prime}<+\infty\ \text{(resp. }\mathcal{N}\left(
\mathbb{C}\right)  =0\text{)}\right.  \right\}  .
\]
This structure turns $\widetilde{\mathbb{C}}$ into a topological ring.)

This topology coincides with the sharp topology, usually defined in terms of
valuations \cite{Scarpa1,Scarpa2}.

\section{Temperate generalized functions\label{DSHCsec3}}

\subsection{Classical construction \cite{GKOS,NePiSc,Scarpa1}}

We recall that
\[
\mathcal{O}_{M}\left(  \Omega\right)  =\left\{  f\in\mathrm{C}^{\infty}\left(
\Omega\right)  \,\left\vert \,\forall l\in\mathbb{N},\;\exists q\in
\mathbb{N}:\ p_{-q,l}(f)<+\infty\right.  \right\}  .
\]
Define
\begin{align*}
\mathcal{M}_{\tau}\left(  \Omega\right)   &  =\left\{  \left(  f_{\varepsilon
}\right)  _{\varepsilon}\in\mathcal{O}_{M}\left(  \Omega\right)  ^{\left(
0,1\right]  }\,\left\vert \,\forall l\in\mathbb{N},\;\exists q\in
\mathbb{N},\;\exists m\in\mathbb{N}:\ p_{-q,l}\left(  f_{\varepsilon}\right)
=\mathrm{o}\left(  \varepsilon^{-m}\right)  \right.  \right\}  ,\\
\mathcal{N}_{\tau}\left(  \Omega\right)   &  =\left\{  \left(  f_{\varepsilon
}\right)  _{\varepsilon}\in\mathcal{O}_{M}\left(  \Omega\right)  ^{\left(
0,1\right]  }\,\left\vert \,\forall l\in\mathbb{N},\;\exists q\in
\mathbb{N},\;\forall m\in\mathbb{N}:\ p_{-q,l}\left(  f_{\varepsilon}\right)
=\mathrm{o}\left(  \varepsilon^{m}\right)  \right.  \right\}  .
\end{align*}
One can show that $\mathcal{M}_{\tau}\left(  \Omega\right)  $ is a subalgebra
of $\mathcal{O}_{M}\left(  \Omega\right)  ^{\left(  0,1\right]  }$ and
$\mathcal{N}_{\tau}\left(  \Omega\right)  $ an ideal of $\mathcal{M}_{\tau
}\left(  \Omega\right)  $. The algebra $\mathcal{G}_{\tau}\left(
\Omega\right)  =\mathcal{M}_{\tau}\left(  \Omega\right)  /\mathcal{N}_{\tau
}\left(  \Omega\right)  $ is called the \emph{algebra of tempered generalized
functions}.

\subsection{New construction}

The topology of $\mathcal{O}_{M}\left(  \Omega\right)  $ may be described by
the non-countable family of semi-norms $\mathcal{P}_{\mathcal{O}_{M}}\left(
\Omega\right)  =\left(  \nu_{\varphi,l}\right)  _{\left(  \varphi,l\right)
\in\mathcal{S}\left(  \Omega\right)  \times\mathbb{N}}$ defined by
\[
\nu_{\varphi,l}(f)=\sup_{x\in\Omega,\left\vert \alpha\right\vert \leq
l}\left\vert \varphi(x)\partial^{\alpha}f(x)\right\vert .
\]

\begin{proposition}
\label{GHTOMAlg}$\mathcal{O}_{M}\left(  \Omega\right)  $ endowed with the
family $\mathcal{P}_{\mathcal{O}_{M}}\left(  \Omega\right)  $ is a topological algebra.
\end{proposition}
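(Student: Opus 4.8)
The plan is to verify two things: first that $\mathcal{P}_{\mathcal{O}_{M}}(\Omega)$ genuinely describes the topology of $\mathcal{O}_{M}(\Omega)$ (so that the proposition makes sense as stated), and second, that this family satisfies the submultiplicativity-type estimate required in Subsection \ref{DSHCSubSec2.1}, namely that for each $\nu_{\varphi,l}$ there exist $\nu_{\psi,j}$, $\nu_{\chi,k}$ and $C>0$ with $\nu_{\varphi,l}(fg)\leq C\,\nu_{\psi,j}(f)\,\nu_{\chi,k}(g)$ for all $f,g\in\mathcal{O}_{M}(\Omega)$. The heart of the matter is this multiplicative estimate; once it holds, continuity of the product (hence the topological algebra structure) follows from the general principle that a locally convex space whose topology is given by a family of semi-norms satisfying such a bound is a topological algebra.

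First I would set up the Leibniz expansion: for $|\alpha|\leq l$,
\[
\partial^{\alpha}(fg)=\sum_{\beta\leq\alpha}\binom{\alpha}{\beta}\,\partial^{\beta}f\,\partial^{\alpha-\beta}g,
\]
so that, multiplying by $\varphi(x)$ and taking absolute values,
\[
\left|\varphi(x)\,\partial^{\alpha}(fg)(x)\right|\leq\sum_{\beta\leq\alpha}\binom{\alpha}{\beta}\left|\varphi(x)\,\partial^{\beta}f(x)\,\partial^{\alpha-\beta}g(x)\right|.
\]
The obstacle is that the single weight $\varphi$ must be distributed across the two factors while keeping both resulting weights in $\mathcal{S}(\Omega)$. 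The clean device is to write $\varphi=\psi\,\chi$ with $\psi,\chi\in\mathcal{S}(\Omega)$: indeed, since $\varphi$ is rapidly decreasing, one may take $\psi=\chi=\operatorname{sgn}(\varphi)\,|\varphi|^{1/2}$, or more safely $\psi=(1+|\varphi|^{2})^{1/2}\,\eta$-type factors, but the simplest choice is to exploit that for $\varphi\in\mathcal{S}(\Omega)$ the function $|\varphi|^{1/2}$ is again rapidly decreasing and smooth away from the zero set; to avoid smoothness issues at zeros I would instead bound $|\varphi|\leq\psi\chi$ pointwise with $\psi,\chi\in\mathcal{S}(\Omega)$ smooth and nonvanishing, for instance $\psi=\chi=(|\varphi|+e^{-|x|^{2}})^{1/2}\langle x\rangle^{0}$ suitably regularized, or most cleanly $\psi(x)=\chi(x)=\bigl(\,|\varphi(x)|+\phi_{0}(x)\,\bigr)^{1/2}$ where $\phi_{0}\in\mathcal{S}(\Omega)$ is a fixed smooth strictly positive rapidly decreasing function dominating $|\varphi|$'s decay. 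Then $\psi,\chi\in\mathcal{S}(\Omega)$ and $|\varphi(x)|\leq\psi(x)\chi(x)$ for all $x$.

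With such a factorization in hand, each summand satisfies
\[
\left|\varphi(x)\,\partial^{\beta}f(x)\,\partial^{\alpha-\beta}g(x)\right|\leq\left|\psi(x)\,\partial^{\beta}f(x)\right|\cdot\left|\chi(x)\,\partial^{\alpha-\beta}g(x)\right|\leq\nu_{\psi,l}(f)\,\nu_{\chi,l}(g),
\]
and summing over the finitely many $\beta\leq\alpha$ and taking the supremum over $x$ and $|\alpha|\leq l$ yields $\nu_{\varphi,l}(fg)\leq C_{l}\,\nu_{\psi,l}(f)\,\nu_{\chi,l}(g)$ with $C_{l}=\sum_{|\alpha|\leq l}\sum_{\beta\leq\alpha}\binom{\alpha}{\beta}$, a finite constant depending only on $l$ and $d$. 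This is exactly the required submultiplicative estimate with $j=k=l$. I would close by remarking that the separate continuity of addition and scalar multiplication is immediate since the $\nu_{\varphi,l}$ are genuine semi-norms, and that the family is directed (one checks $\max(\nu_{\varphi,l},\nu_{\psi,l})\leq\nu_{|\varphi|+|\psi|,l}$ and monotonicity in $l$), so that the locally convex topology together with the above bound makes the multiplication jointly continuous; hence $\mathcal{O}_{M}(\Omega)$ with $\mathcal{P}_{\mathcal{O}_{M}}(\Omega)$ is a topological algebra. The main delicacy, as noted, is producing the smooth Schwartz factorization $|\varphi|\leq\psi\chi$ without introducing non-smooth behavior at the zeros of $\varphi$, which the additive regularization by a fixed positive $\phi_{0}\in\mathcal{S}(\Omega)$ resolves.
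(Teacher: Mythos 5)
Your overall strategy --- Leibniz expansion plus splitting the single weight $\varphi$ into two Schwartz weights --- is exactly the route the paper takes: the whole content of the proposition is the estimate $\nu_{\varphi,l}(fg)\leq C\,\nu_{\psi,l}(f)\,\nu_{\psi,l}(g)$, and your reduction of joint continuity of the product to this bound, as well as the Leibniz bookkeeping with the constant $C_{l}$, are fine. The gap is in the only nontrivial step: producing the smooth factorization of the weight. Your proposed $\psi=\chi=\left(|\varphi|+\phi_{0}\right)^{1/2}$ is \emph{not} in $\mathcal{S}(\Omega)$, because it is not smooth. Adding a strictly positive smooth $\phi_{0}$ makes the argument of the square root strictly positive, but it does not remove the kink that $|\varphi|$ has at any zero where $\varphi$ changes sign: in dimension $1$, with $\varphi(x)=x\operatorname{e}^{-x^{2}}$, the function $|\varphi|+\phi_{0}$ fails to be differentiable at $0$, and composing with $t\mapsto t^{1/2}$, which is smooth only on $(0,+\infty)$ as a function of $t$, preserves this failure. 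So the additive regularization resolves strict positivity, which was never the obstruction, and not smoothness, which is. A subtler pitfall in the same vein: even if you had a smooth strictly positive $\phi_{0}\in\mathcal{S}(\Omega)$ with $|\varphi|\leq\phi_{0}$, the factorization $|\varphi|\leq\left(\sqrt{\phi_{0}}\right)^{2}$ is still not automatic, because the square root of a strictly positive Schwartz function need not be Schwartz: its derivative $\partial_{j}\phi_{0}/(2\sqrt{\phi_{0}})$ can be unbounded if $\phi_{0}$ has deep dips. (Your directedness remark suffers from the same flaw: $|\varphi|+|\psi|$ is not smooth, so $\nu_{|\varphi|+|\psi|,l}$ is not a member of the family $\mathcal{P}_{\mathcal{O}_{M}}(\Omega)$.)

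What is actually needed --- and what the paper isolates as Lemma \ref{GHTOMSemi} --- is the statement that every positive \emph{continuous} function which is rapidly decreasing (all $p_{q,0}$ finite) is dominated pointwise by an element of $\mathcal{S}(\Omega)$. One applies this lemma to $|\varphi|^{1/2}$, which is continuous and rapidly decreasing but in general not smooth, to obtain a genuine $\psi\in\mathcal{S}(\Omega)$ with $|\varphi|^{1/2}\leq\psi$, hence $|\varphi|\leq\psi^{2}$; after that, your Leibniz computation goes through verbatim and gives $\nu_{\varphi,l}(fg)\leq C_{l}\,\nu_{\psi,l}(f)\,\nu_{\psi,l}(g)$. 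That lemma is the real work (a classical construction, e.g.\ by majorizing on dyadic annuli and smoothing by convolution), and it is precisely the ingredient your attempt replaces by a construction that does not output smooth functions. To repair the proof: prove or cite Lemma \ref{GHTOMSemi}, apply it to $|\varphi|^{1/2}$, and keep the rest of your argument unchanged.
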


This result is classical.\ For the continuity of the product, one establishes
the property%
\[
\forall\left(  \varphi,l\right)  \in\mathcal{S}\left(  \Omega\right)
\times\mathbb{N},\ \exists\psi\in\mathcal{S}\left(  \Omega\right)  ,\ \exists
C>0:\ \forall\left(  f,g\right)  \in\mathcal{O}_{M}\left(  \Omega\right)
^{2},\ \ \nu_{\varphi,l}(fg)\leq C\nu_{\psi,l}(f)\nu_{\psi,l}(g),
\]
which is a consequence of the following:

\begin{lemma}
\label{GHTOMSemi}For any $\psi\in C^{0}\left(  \Omega\right)  $ with positive
values such that, for any $q>0$, $p_{q,0}(\varphi)<+\infty$ there exists
$\varphi\in\mathcal{S}\left(  \Omega\right)  $ such that $\psi\leq\varphi$.
\end{lemma}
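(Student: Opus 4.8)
The plan is to reduce the multivariate domination to a one-dimensional monotone problem and then to smooth by a one-sided mollification that preserves the majorant property. Concretely, the hypothesis $p_{q,0}(\psi)<+\infty$ gives a bound $\psi(x)\le A_q\langle x\rangle^{-q}$ on $\Omega$ for every $q$, with $A_q=p_{q,0}(\psi)$. First I would introduce the decreasing envelope
\[
\mu(t)=\sup\{\psi(y):y\in\Omega,\ \langle y\rangle\ge t\},\qquad t\ge 1,
\]
with the convention $\mu(t)=0$ when the indexing set is empty, extended to $t<1$ by $\mu(t)=\mu(1)$. By construction $\mu$ is non-increasing on $\mathbb{R}$, and taking $y=x$ in the supremum gives the pointwise bound $\psi(x)\le\mu(\langle x\rangle)$ for every $x\in\Omega$. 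The estimate $\psi(y)\le A_q\langle y\rangle^{-q}$ yields $\mu(t)\le A_q t^{-q}$ for $t\ge 1$, so $\mu$ is rapidly decreasing and, in particular, finite everywhere.

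Next I would smooth $\mu$ without losing the inequality. I fix $\rho\in\mathcal{D}(\mathbb{R})$ with $\rho\ge 0$, $\mathrm{supp}\,\rho\subset[0,1]$ and $\int\rho=1$, and set $H=\mu*\rho$, that is $H(t)=\int_0^1\mu(t-s)\rho(s)\,ds$. Then $H\in C^\infty(\mathbb{R})$ since $\rho$ is smooth and compactly supported, and since $\mu$ is non-increasing we have $\mu(t-s)\ge\mu(t)$ for $s\in[0,1]$, whence $H(t)\ge\mu(t)$ for all $t$. For the derivatives I would differentiate under the integral, $H^{(k)}(t)=\int_0^1\mu(t-s)\rho^{(k)}(s)\,ds$, and bound $|H^{(k)}(t)|\le\|\rho^{(k)}\|_{L^1}\,\mu(t-1)$ using monotonicity; combined with $\mu(t-1)\le A_q(t-1)^{-q}\le A_q 2^q t^{-q}$ for $t\ge 2$ and with the boundedness of the continuous function $H^{(k)}$ on the compact interval $[1,2]$, this shows that $t\mapsto t^q H^{(k)}(t)$ is bounded on $[1,\infty)$ for every $q$ and $k$.

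Finally I would set $\varphi(x)=H(\langle x\rangle)$. Since $\langle x\rangle=(1+|x|^2)^{1/2}\ge 1$ is smooth on $\mathbb{R}^d$ with all its partial derivatives of order $\ge 1$ bounded, the multivariate chain rule (Fa\`{a} di Bruno) expresses $\partial^\alpha\varphi(x)$ as a finite sum of terms $H^{(k)}(\langle x\rangle)\,P_{\alpha,k}(x)$ with $k\le|\alpha|$ and bounded coefficients $P_{\alpha,k}$; the previous step then gives $\langle x\rangle^q|\partial^\alpha\varphi(x)|\le C_{\alpha,q}$, so $p_{q,l}(\varphi)<+\infty$ for all $q,l$ and, after restriction to $\Omega$, $\varphi\in\mathcal{S}(\Omega)$. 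Moreover $\varphi(x)=H(\langle x\rangle)\ge\mu(\langle x\rangle)\ge\psi(x)$ on $\Omega$, which is the desired majorant. The only genuinely delicate point is the smoothing step: one must orient the mollifier (support in $[0,1]$ rather than $[-1,0]$) so that the monotonicity of $\mu$ pushes the convolution \emph{above} $\mu$ rather than below, while that same monotonicity is what keeps the rapid decay under control; everything else is routine chain-rule bookkeeping.
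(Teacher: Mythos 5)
Your proof is correct, and in fact the paper provides no proof of this lemma at all: it is stated as a classical ingredient (used to establish the continuity of the product in $\mathcal{O}_{M}(\Omega)$) with neither an argument nor an explicit reference, so your write-up supplies a genuinely missing piece rather than an alternative to one in the text. Both of your key devices check out. First, the radial non-increasing envelope $\mu(t)=\sup\{\psi(y):y\in\Omega,\ \langle y\rangle\ge t\}$ indeed satisfies $\psi(x)\le\mu(\langle x\rangle)$ and $\mu(t)\le p_{q,0}(\psi)\,t^{-q}$ for $t\ge1$, reducing the problem to a one-dimensional monotone one; your convention $\mu=0$ on empty index sets also covers bounded $\Omega$, and positivity of $\psi$ is never actually needed, since the case $q=1$ of the hypothesis already makes $\mu$ finite. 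Second, the one-sided mollifier with $\mathrm{supp}\,\rho\subset[0,1]$ is oriented correctly: monotonicity pushes $H=\mu\ast\rho$ above $\mu$, while that same monotonicity yields $|H^{(k)}(t)|\le\|\rho^{(k)}\|_{L^{1}}\,\mu(t-1)\le C_{q,k}\,t^{-q}$ for $t\ge2$; the formula $H^{(k)}(t)=\int_{0}^{1}\mu(t-s)\rho^{(k)}(s)\,\mathrm{d}s$ is legitimate because the derivatives fall on the smooth factor $\rho$, the function $\mu$ being merely measurable (monotone) and locally bounded. Finally, since every derivative of $\langle\cdot\rangle$ of order at least one is bounded on $\mathbb{R}^{d}$, the Fa\`{a} di Bruno bookkeeping converts the one-dimensional estimates into $p_{q,l}(\varphi)<+\infty$ for $\varphi=H(\langle\cdot\rangle)$, and $\varphi\ge\mu(\langle\cdot\rangle)\ge\psi$ on $\Omega$, which is exactly the assertion of the lemma.
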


With the previous notations, we set%
\begin{align*}
\mathcal{M}_{\mathcal{O}_{M}}(\Omega)  &  =\mathcal{M}_{(\mathcal{O}%
_{M},\mathcal{P}_{\mathcal{O}_{M}})}(\Omega)\\
&  =\left\{  \left(  f_{\varepsilon}\right)  _{\varepsilon}\in\mathcal{O}%
_{M}\left(  \Omega\right)  ^{\left(  0,1\right]  }\,\left\vert \,\forall
\varphi\in\mathcal{S}\left(  \Omega\right)  ,\;\forall l\in\mathbb{N}%
,\;\exists m\in\mathbb{N}:\ \nu_{\varphi,l}(f_{\varepsilon})=\mathrm{o}\left(
\varepsilon^{-m}\right)  \right.  \right\}  ,\\
\mathcal{N}_{\mathcal{O}_{M}}(\Omega)  &  =\mathcal{N}_{(\mathcal{O}%
_{M},\mathcal{P}_{\mathcal{O}_{M}})}(\Omega)\\
&  =\left\{  \left(  f_{\varepsilon}\right)  _{\varepsilon}\in\mathcal{O}%
_{M}\left(  \Omega\right)  ^{\left(  0,1\right]  }\,\left\vert \,\forall
\varphi\in\mathcal{S}\left(  \Omega\right)  ,\;\forall l\in\mathbb{N}%
,\;\forall m\in\mathbb{N}:\ \nu_{\varphi,l}(f_{\varepsilon})=\mathrm{o}\left(
\varepsilon^{m}\right)  \right.  \right\}  .
\end{align*}

\begin{proposition}
\label{GOMNllePPP}We have $\mathcal{M}_{\mathcal{O}_{M}}(\mathbb{R}%
^{d})=\mathcal{M}_{\tau}(\mathbb{R}^{d})$ and $\mathcal{N}_{\mathcal{O}_{M}%
}(\mathbb{R}^{d})=\mathcal{N}_{\tau}(\mathbb{R}^{d}).$
\end{proposition}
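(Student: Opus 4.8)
The plan is to prove the two equalities by establishing both inclusions in each, treating $\mathcal{M}$ and $\mathcal{N}$ in parallel since they differ only in the quantifier on $m$. The elementary ingredient is the pointwise comparison of the two weight systems: for every $\varphi\in\mathcal{S}(\mathbb{R}^{d})$ and every $q\in\mathbb{N}$ one has $\left\vert\varphi(x)\right\vert\leq C_{\varphi,q}\left\langle x\right\rangle^{-q}$ with $C_{\varphi,q}=p_{q,0}(\varphi)<+\infty$, whence
\[
\nu_{\varphi,l}(f)\leq C_{\varphi,q}\,p_{-q,l}(f)\qquad\text{for all }f\in\mathcal{O}_{M}(\mathbb{R}^{d})\text{ and all }l.
\]
From this the inclusions $\mathcal{M}_{\tau}\subseteq\mathcal{M}_{\mathcal{O}_{M}}$ and $\mathcal{N}_{\tau}\subseteq\mathcal{N}_{\mathcal{O}_{M}}$ are immediate: given $(f_{\varepsilon})\in\mathcal{M}_{\tau}$ and a pair $(\varphi,l)$, one selects the $q$ furnished by the classical condition at order $l$, and then $\nu_{\varphi,l}(f_{\varepsilon})\leq C_{\varphi,q}\,p_{-q,l}(f_{\varepsilon})$ inherits the estimate $\mathrm{o}(\varepsilon^{-m})$ (respectively $\mathrm{o}(\varepsilon^{m})$ for all $m$ in the negligible case).

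The substantial content lies in the reverse inclusions, typically $\mathcal{M}_{\mathcal{O}_{M}}\subseteq\mathcal{M}_{\tau}$: here no single Schwartz weight dominates a polynomial weight $\left\langle x\right\rangle^{-q}$, so the inclusion cannot follow from a seminorm comparison and must exploit the whole family $(\nu_{\varphi,l})_{\varphi}$ simultaneously. I would argue by contradiction. Suppose $(f_{\varepsilon})\in\mathcal{M}_{\mathcal{O}_{M}}$ fails classical moderateness at some order $l_{0}$; writing $G_{\varepsilon}=\max_{\left\vert\alpha\right\vert\leq l_{0}}\left\vert\partial^{\alpha}f_{\varepsilon}\right\vert$, this means that for every $q$ and every $m$ the quantity $\sup_{x}\left\langle x\right\rangle^{-q}G_{\varepsilon}(x)$ is not $\mathrm{o}(\varepsilon^{-m})$. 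Testing $\mathcal{O}_{M}$-moderateness against compactly supported weights (which lie in $\mathcal{S}$) first shows that on every fixed compact set $G_{\varepsilon}$ is bounded by a fixed negative power of $\varepsilon$, so the failure must be produced by points escaping to infinity. Taking $q=m=k$ and letting $k\to\infty$, I would extract parameters $\varepsilon_{k}\downarrow0$ and points $x_{k}$ with $\left\vert x_{k}\right\vert\to\infty$ such that $G_{\varepsilon_{k}}(x_{k})\geq\tfrac{1}{2}\gamma_{k}\varepsilon_{k}^{-k}\left\langle x_{k}\right\rangle^{k}$ for some $\gamma_{k}>0$, and, using the escape, I may additionally require $\left\langle x_{k}\right\rangle$ so large that the nuisance constants $\gamma_{k}^{-1}$ are absorbed.

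The heart of the proof is then the construction of a single offending weight. The prescribed heights, of order $\gamma_{k}^{-1}\left\langle x_{k}\right\rangle^{-k}$, are rapidly decreasing in $\left\langle x_{k}\right\rangle$ — this is exactly where the growing exponent $k$ and the escape $\left\vert x_{k}\right\vert\to\infty$ are used, the factor $\left\langle x_{k}\right\rangle^{-k}$ absorbing $\gamma_{k}^{-1}$ once $\left\langle x_{k}\right\rangle$ is large enough. Hence one can build a continuous, positive, rapidly decreasing $\psi$ lying above these heights at the $x_{k}$, and Lemma \ref{GHTOMSemi} upgrades $\psi$ to a genuine $\varphi\in\mathcal{S}(\mathbb{R}^{d})$ with $\psi\leq\varphi$. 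For this $\varphi$ one obtains $\nu_{\varphi,l_{0}}(f_{\varepsilon_{k}})\geq\varphi(x_{k})G_{\varepsilon_{k}}(x_{k})\geq c\,\varepsilon_{k}^{-k}$ along $\varepsilon_{k}\downarrow0$ for a fixed $c>0$, so $\nu_{\varphi,l_{0}}(f_{\varepsilon})$ is not $\mathrm{o}(\varepsilon^{-m})$ for any $m$, contradicting $(f_{\varepsilon})\in\mathcal{M}_{\mathcal{O}_{M}}$.

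The inclusion $\mathcal{N}_{\mathcal{O}_{M}}\subseteq\mathcal{N}_{\tau}$ follows the same scheme, now fixing the single order $m_{0}$ at which classical negligibility fails and diagonalising only over $q=k$: one extracts $\varepsilon_{k}\downarrow0$ and escaping $x_{k}$ with $G_{\varepsilon_{k}}(x_{k})\geq\tfrac{1}{2}\gamma_{k}\varepsilon_{k}^{m_{0}}\left\langle x_{k}\right\rangle^{k}$, builds $\varphi$ as above via Lemma \ref{GHTOMSemi}, and the cancellation of the $\varepsilon_{k}^{m_{0}}$ factors yields $\nu_{\varphi,l_{0}}(f_{\varepsilon_{k}})\geq c\,\varepsilon_{k}^{m_{0}}$, contradicting negligibility. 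The main obstacle throughout is precisely this construction step: arranging the prescribed heights to stay dominable by a Schwartz function while still witnessing the failure. This is what forces the diagonalisation over $q\to\infty$ together with pushing the witnesses $x_{k}$ to infinity, and it is the step where Lemma \ref{GHTOMSemi} is indispensable.
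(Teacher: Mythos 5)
Your overall architecture is the same as the paper's own proof: the easy inclusions via the seminorm comparison $\nu_{\varphi,l}(f)\leq p_{q,0}(\varphi)\,p_{-q,l}(f)$, and the hard inclusions by contradiction, diagonalising $q=m=k$, extracting separated witnesses escaping to infinity, and manufacturing an offending Schwartz weight out of bumps with heights $\langle x_k\rangle^{-k}$ (the paper writes $\varphi=\sum_q\langle x_q\rangle^{-q}\theta(\cdot-x_q)$ explicitly, where you invoke Lemma \ref{GHTOMSemi}; that substitution is fine). However, two of your mechanisms fail as written.

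\textbf{Moderate case.} The step ``using the escape, I may additionally require $\langle x_k\rangle$ so large that the nuisance constants $\gamma_k^{-1}$ are absorbed'' is not available, because the escape is not a parameter you control. A witness of $\sup_x\langle x\rangle^{-k}G_{\varepsilon}(x)\geq\frac{1}{2}\gamma_k\varepsilon^{-k}$ is forced outside $B(0,r)$ only when $\frac{1}{2}\gamma_k\varepsilon^{-k}$ exceeds the compact-set bound $C_r\varepsilon^{-m_r}$ (obtained from $\mathcal{O}_M$-moderateness tested on a cutoff equal to $1$ on $B(0,r)$), i.e.\ only for those $r$ whose exponent $m_r$ --- a property of the net --- satisfies $m_r<k$. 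So at level $k$ the witnesses are guaranteed to lie beyond a radius dictated by the net, and nothing relates that radius to $\gamma_k^{-1}$, which may grow arbitrarily fast with $k$; the prescribed heights $\gamma_k^{-1}\langle x_k\rangle^{-k}$ then need not be dominated by any Schwartz function, and the central construction collapses. The freedom you actually have is in $\varepsilon$, not in $x$: since $\gamma_k$ is a fixed constant at level $k$ and the failure holds along a sequence $\varepsilon\to 0$, choose $\varepsilon_k\leq\gamma_k$ (and, to push $x_k$ past $|x_{k-1}|+2$, invoke the failure at an $\varepsilon$-exponent larger than the compact exponent of $B(0,|x_{k-1}|+2)$). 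Then the clean heights $\langle x_k\rangle^{-k}$ suffice and the bounds $\nu_{\varphi,|\alpha|}(f_{\varepsilon_k})\geq\frac{1}{2}\varepsilon_k^{-(k-1)}$ still defeat every fixed $\varepsilon^{-m}$. This is exactly what the paper gains by first proving the characterisation (\ref{GOMNlleFF}): its negation hands you, for every prescribed $(q,m,\varepsilon_0,r)$, some $\varepsilon\leq\varepsilon_0$ and a point outside $B(0,r)$ with constant $1$, so nuisance constants never appear.

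\textbf{Negligible case.} You start from ``the single order $m_0$ at which classical negligibility fails''. With $\mathcal{N}_\tau$ as printed in Section \ref{DSHCsec3} ($\forall l\,\exists q\,\forall m$), the negation only gives: there is $l_0$ such that for every $q$ there is some failing order $m_q$ \emph{depending on} $q$. If the $m_q$ are unbounded, your diagonal produces lower bounds $\nu_{\varphi,l_0}(f_{\varepsilon_k})\geq c\,\varepsilon_k^{m_k}$ with $m_k\to\infty$, which contradict no negligibility estimate, and the proof does not close. Your reduction is instead the correct negation of the classical quantifier order ($\forall l\,\forall m\,\exists q$, as in GKOS, to which the paper itself appeals through its Theorem 1.2.27), and under that reading your scheme works (modulo the same constant-absorption repair as above, now via $\varepsilon_k\leq\gamma_k$). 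But the two quantifier orders are not interchangeable --- the existence of a $q$ uniform in $m$ is genuinely stronger --- so relative to the definition as printed this step is a logical jump that has to be addressed rather than glossed.
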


\begin{proof}
From the definitions, we immediately get that $\mathcal{M}_{\tau}%
(\mathbb{R}^{d})\subset\mathcal{M}_{\mathcal{O}_{M}}(\mathbb{R}^{d})$ (resp.
$\mathcal{N}_{\tau}(\mathbb{R}^{d})\subset\mathcal{N}_{\mathcal{O}_{M}%
}(\mathbb{R}^{d})$.)\ For the inverse inclusions, we begin by proving that,
for $\left(  f_{\varepsilon}\right)  _{\varepsilon}\in\mathcal{M}%
_{\mathcal{O}_{M}}(\mathbb{R}^{d})$, $\left(  f_{\varepsilon}\right)
_{\varepsilon}\in\mathcal{M}_{\tau}(\mathbb{R}^{d})$ if, and only if, $\left(
f_{\varepsilon}\right)  _{\varepsilon}$ satisfies the following characteristic
property%
\begin{equation}%
\begin{array}
[c]{l}%
\forall\alpha\in\mathbb{N}^{d},\ \exists q\in\mathbb{N},\;\exists
m\in\mathbb{N},\ \exists\varepsilon_{0}\in\left(  0,1\right]  ,\ \exists
r>0:\hspace{1.3in}\\
\multicolumn{1}{r}{\hspace{1.3in}\forall\varepsilon\in\left(  0,\varepsilon
_{0}\right]  ,\ \forall x\notin B\left(  0,r\right)  ,\ \ \left\langle
x\right\rangle ^{-q}\left\vert \partial^{\alpha}f_{\varepsilon}\left(
x\right)  \right\vert \leq\varepsilon^{-m}.}%
\end{array}
\label{GOMNlleFF}%
\end{equation}
Indeed, we can easily see that if $\left(  f_{\varepsilon}\right)
_{\varepsilon}\in\mathcal{M}_{\tau}(\mathbb{R}^{d})$, the property
(\ref{GOMNlleFF}) holds even if $\left(  f_{\varepsilon}\right)
_{\varepsilon}\notin\mathcal{M}_{\mathcal{O}_{M}}(\mathbb{R}^{d})$. Conversely
suppose that $\left(  f_{\varepsilon}\right)  _{\varepsilon}\in\mathcal{M}%
_{\mathcal{O}_{M}}(\mathbb{R}^{d})$ and that (\ref{GOMNlleFF}) holds.\ Fix
$\alpha\in\mathbb{N}^{d}$. There exist $q\in\mathbb{N}$,\ $m\in\mathbb{N}%
$,\ $\varepsilon_{0}\in\left(  0,1\right]  $,\ $r>0$ such that
(\ref{GOMNlleFF}) holds. Let us show that $\left\langle x\right\rangle
^{-q}\left\vert \partial^{\alpha}f_{\varepsilon}\left(  x\right)  \right\vert
\leq\varepsilon^{-m^{\prime}}$ for some $m^{\prime}\in\mathbb{N}$,
$\varepsilon$ small enough and all $x\in B\left(  0,r\right)  $. Consider
$\varphi\in\mathcal{D}\left(  \mathbb{R}^{d}\right)  $ with $0\leq\varphi
\leq1$ and $\varphi\equiv1$ on $B\left(  0,r\right)  $. According to the
definition of $\mathcal{M}_{\mathcal{O}_{M}}(\mathbb{R}^{d})$, used with
$l=\left\vert \alpha\right\vert $, there exists $\varepsilon_{0}^{\prime}%
\in\left(  0,1\right]  $ such that
\[
\forall\varepsilon\in\left(  0,\varepsilon_{0}^{\prime}\right]  ,\ \forall
x\in B\left(  0,r\right)  ,\ \ \left\langle x\right\rangle ^{-q}\left\vert
\partial^{\alpha}f_{\varepsilon}\left(  x\right)  \right\vert \leq\left\vert
\partial^{\alpha}f_{\varepsilon}\left(  x\right)  \right\vert \leq\nu
_{\varphi,l}(f_{\varepsilon})\leq\varepsilon^{-m^{\prime}}.
\]
Taking $\varepsilon_{1}=\min(\varepsilon_{0},\varepsilon_{0}^{\prime})$,
$m_{1}=\max\left(  m,m^{\prime}\right)  $, we obtain that%
\[
\forall\varepsilon\in\left(  0,\varepsilon_{1}\right]  \text{, }\forall
x\in\mathbb{R}^{d},\ \ \left\langle x\right\rangle ^{-q}\left\vert
\partial^{\alpha}f_{\varepsilon}\left(  x\right)  \right\vert \leq
\varepsilon^{-m_{1}}.
\]
From this last property, a classical argument shows that$\ p_{-q,l}\left(
f_{\varepsilon}\right)  =\sup_{x\in\mathbb{R}^{d},\left\vert \alpha\right\vert
\leq l}\left\langle x\right\rangle ^{-q}\left\vert \partial^{\alpha
}f_{\varepsilon}\left(  x\right)  \right\vert =\mathrm{o}\left(
\varepsilon^{-M}\right)  $, provided $M$ is chosen big enough. Thus $\left(
f_{\varepsilon}\right)  _{\varepsilon}\in\mathcal{M}_{\tau}(\mathbb{R}^{d}%
)$.\smallskip

Let us return to the proof of the inclusion $\mathcal{M}_{\mathcal{O}_{M}%
}(\mathbb{R}^{d})\subset\mathcal{M}_{\tau}(\mathbb{R}^{d})$. Take $\left(
f_{\varepsilon}\right)  _{\varepsilon}\in\mathcal{M}_{\mathcal{O}_{M}}%
(\Omega)$ and suppose that (\ref{GOMNlleFF}) does not hold.\ There exist
$\alpha\in\mathbb{N}^{d}$ for which we can built by induction a sequence
$\left(  \varepsilon_{q}\right)  _{q\geq0}$ with $\varepsilon_{q}%
\overset{q\rightarrow+\infty}{\longrightarrow}0$ and a sequence $\left(
x_{q}\right)  _{q\geq0}$ with $\left\vert x_{q+1}\right\vert \geq\left\vert
x_{q}\right\vert +2$ such that
\[
\left\langle x_{q}\right\rangle ^{-q}\left\vert \partial^{\alpha
}f_{\varepsilon_{q}}\left(  x_{q}\right)  \right\vert >\varepsilon_{q}^{-q}.
\]
Consider $\theta\in\mathcal{D}\left(  \mathbb{R}^{d}\right)  $ with
$\operatorname*{supp}\theta\subset B\left(  0,1\right)  $, $0\leq\theta\leq1$
and, say, $\theta\left(  0\right)  =1$. Set
\[
\varphi\left(  x\right)  =\sum_{q=0}^{+\infty}\left\langle x_{q}\right\rangle
^{-q}\theta\left(  x-x_{q}\right)  .
\]
Following \cite{vokhac2}, it can be verified that $\varphi$ belongs to
$\mathcal{S}\left(  \mathbb{R}^{d}\right)  $. (Note that $\operatorname*{supp}%
\left(  x\mapsto\theta\left(  x-x_{q}\right)  \right)  \cap
\operatorname*{supp}\left(  x\mapsto\theta\left(  x-x_{q^{\prime}}\right)
\right)  =\varnothing$ for $q\neq q^{\prime}$, justifying the choice of
$\left(  x_{q}\right)  _{q\geq0}$.) We have%
\[
\varphi\left(  x_{q}\right)  \left\vert \partial^{\alpha}f_{\varepsilon_{q}%
}\left(  x_{q}\right)  \right\vert >\varepsilon_{q}^{-m}\theta\left(
0\right)  =\varepsilon_{q}^{-q}.
\]
Thus, for all $q\in\mathbb{N}$, $\nu_{\varphi,\left\vert \alpha\right\vert
}(f_{\varepsilon_{q}})>\varepsilon_{q}^{-q}$, with $\varepsilon_{q}%
\overset{q\rightarrow+\infty}{\longrightarrow}0$ in contradiction with the
definition of $\mathcal{M}_{\mathcal{O}_{M}}(\Omega)$.\ Finally $\left(
f_{\varepsilon}\right)  _{\varepsilon}\in\mathcal{M}_{\tau}(\mathbb{R}^{d})$.
The proof of the inclusion $\mathcal{N}_{\mathcal{O}_{M}}(\mathbb{R}%
^{d})\subset\mathcal{N}_{\tau}(\mathbb{R}^{d})$ is quite similar.
\end{proof}

\begin{corollary}
\label{GOMNlleCCC}We have $\mathcal{G}_{\tau}(\mathbb{R}^{d})=\mathcal{M}%
_{\tau}(\mathbb{R}^{d})/\mathcal{N}_{\tau}(\mathbb{R}^{d}).$
\end{corollary}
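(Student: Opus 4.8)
The plan is to read this identity off directly from Proposition \ref{GOMNllePPP}; essentially all the substantive content has already been established there. First I would recall that the new construction realizes the temperate algebra as the Colombeau type algebra attached to the pair $(\mathcal{O}_{M},\mathcal{P}_{\mathcal{O}_{M}})$ in the sense of Definition \ref{DSHCDefCTA}, namely as the quotient $\mathcal{M}_{\mathcal{O}_{M}}(\mathbb{R}^{d})/\mathcal{N}_{\mathcal{O}_{M}}(\mathbb{R}^{d})$. For this to be a legitimate quotient algebra I would invoke Proposition \ref{GHTOMAlg}, which guarantees that $\mathcal{O}_{M}(\mathbb{R}^{d})$ equipped with $\mathcal{P}_{\mathcal{O}_{M}}(\mathbb{R}^{d})$ is a topological algebra satisfying the multiplicative semi-norm estimate required in Subsection \ref{DSHCSubSec2.1}; Proposition \ref{GOMNELLEP1}$(a)$, specialized to $\Omega=\mathbb{R}^{d}$, then yields that $\mathcal{M}_{\mathcal{O}_{M}}(\mathbb{R}^{d})$ is a $\mathbb{K}$-algebra and $\mathcal{N}_{\mathcal{O}_{M}}(\mathbb{R}^{d})$ an ideal of it.

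Next I would simply apply Proposition \ref{GOMNllePPP}, which asserts the two equalities $\mathcal{M}_{\mathcal{O}_{M}}(\mathbb{R}^{d})=\mathcal{M}_{\tau}(\mathbb{R}^{d})$ and $\mathcal{N}_{\mathcal{O}_{M}}(\mathbb{R}^{d})=\mathcal{N}_{\tau}(\mathbb{R}^{d})$. Since the numerator and the denominator of the new quotient coincide, as an algebra and as an ideal respectively, with the classical objects $\mathcal{M}_{\tau}(\mathbb{R}^{d})$ and $\mathcal{N}_{\tau}(\mathbb{R}^{d})$ introduced in the classical construction, the two quotient algebras are literally the same object. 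Hence $\mathcal{G}_{\tau}(\mathbb{R}^{d})=\mathcal{M}_{\tau}(\mathbb{R}^{d})/\mathcal{N}_{\tau}(\mathbb{R}^{d})$, which is precisely the asserted agreement of the classical temperate algebra with the one produced by the new construction.

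There is no genuine obstacle at this stage: the corollary is a formal consequence of the set-theoretic identities proved in Proposition \ref{GOMNllePPP}, and the only point worth making explicit is that the agreement is not merely set-theoretic but also respects the algebra (and $\widetilde{\mathbb{C}}$-module) structures, which is automatic since both sides are the quotient of one and the same algebra by one and the same ideal. The real work — the nontrivial inclusions $\mathcal{M}_{\mathcal{O}_{M}}(\mathbb{R}^{d})\subset\mathcal{M}_{\tau}(\mathbb{R}^{d})$ and $\mathcal{N}_{\mathcal{O}_{M}}(\mathbb{R}^{d})\subset\mathcal{N}_{\tau}(\mathbb{R}^{d})$, proved via the characteristic property (\ref{GOMNlleFF}) and the construction of a suitable $\varphi\in\mathcal{S}(\mathbb{R}^{d})$ detecting bad behaviour at infinity — was carried out in the proof of that proposition, so nothing further is required here.
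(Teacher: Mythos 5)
Your proposal is correct and takes essentially the same route as the paper: the paper states this corollary without a separate proof, precisely because it follows immediately from the set-theoretic identities $\mathcal{M}_{\mathcal{O}_{M}}(\mathbb{R}^{d})=\mathcal{M}_{\tau}(\mathbb{R}^{d})$ and $\mathcal{N}_{\mathcal{O}_{M}}(\mathbb{R}^{d})=\mathcal{N}_{\tau}(\mathbb{R}^{d})$ of Proposition \ref{GOMNllePPP}, the quotient structure being legitimate by Propositions \ref{GHTOMAlg} and \ref{GOMNELLEP1}$(a)$ exactly as you indicate. Your additional remark that the identification respects the algebra and $\widetilde{\mathbb{C}}$-module structures is a harmless (and correct) elaboration of what the paper leaves implicit.
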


\begin{remark}
\label{GOMnelleGOMLocP-Rem}~\newline$\left(  i\right)  $~Following Subsection
\ref{DSHCSubSec2.3}, $\mathcal{G}_{\tau}(\mathbb{R}^{d})$ is naturally
equipped with a topological structure, given by the non countable family of
ultrapseudometrics $\left(  d_{\varphi,l}\right)  _{\left(  \varphi,l\right)
\in\mathcal{S}\left(  \Omega\right)  \times\mathbb{N}}$ defined by
\[
d_{\varphi,l}(f,g)=\limsup_{\varepsilon\rightarrow0}\nu_{\varphi
,l}(f_{\varepsilon}-g_{\varepsilon})^{\left\vert \ln\varepsilon\right\vert
^{-1}}\text{ where }\left(  f_{\varepsilon}\right)  _{\varepsilon}\in
f,\ \left(  g_{\varepsilon}\right)  _{\varepsilon}\in g.
\]
$\left(  ii\right)  $~According to Proposition \ref{GOMNELLEP1},
$\mathcal{G}_{\tau}(\cdot)$ is a presheaf of algebras. However, the
localization principle $(F_{1})$ does not hold for $\mathcal{G}_{\tau}(\cdot)$
as shown by the following example.
\end{remark}

\begin{example}
\label{GOMnelleGOMLocP-Exa}We adapt a classical example, which was first used
to show that $\mathcal{G}_{\mathcal{\tau}}(\cdot)$ is not a subpresheaf of
$\mathcal{G}(\cdot)$ \cite{Scarpa1}.\ Consider $\Psi\in\mathcal{D}\left(
\mathbb{R}\right)  $ such that $0\leq\Psi\leq1$ and, say, $\Psi\left(
0\right)  =1$. Set $f_{\varepsilon}\left(  \cdot\right)  =\Psi(\cdot
-\left\vert \ln\varepsilon\right\vert ^{1/2})$. Obviously $\left(
f_{\varepsilon}\right)  _{\varepsilon}\in\mathcal{M}_{\mathcal{O}_{M}%
}(\mathbb{R})$, defining $f\in\mathcal{G}_{\tau}(\mathbb{R})$. Consider
$\Omega_{h}=\left]  -h,h\right[  $ for $h\in\mathbb{N}$. As $\left\vert
\ln\varepsilon\right\vert ^{1/2}\overset{\varepsilon\rightarrow0}%
{\longrightarrow}+\infty$,\ we have $f\left\vert _{\Omega_{h}}\right.  =0$.
However, $f_{\varepsilon}(\left\vert \ln\varepsilon\right\vert ^{1/2}%
)=\Psi\left(  0\right)  =1$.\ Take $\varphi\in\mathcal{S}(\mathbb{R})$ defined
by $\varphi\left(  x\right)  =\exp(-x^{2})$. We have $\varphi(\left\vert
\ln\varepsilon\right\vert ^{1/2})f_{\varepsilon}(\left\vert \ln\varepsilon
\right\vert ^{1/2})=\varepsilon$. Thus $\nu_{\varphi,0}(f_{\varepsilon}%
)\geq\varepsilon$ and $\left(  f_{\varepsilon}\right)  _{\varepsilon}%
\notin\mathcal{N}_{\mathcal{O}_{M}}(\mathbb{R})$. Therefore $f$ is non equal
to $0$ on $\mathbb{R=}\cup_{h\in\mathbb{N}}\Omega_{h}$.
\end{example}

We set%
\[
\mathcal{N}_{\mathcal{O}_{M},0}=\left\{  \left(  f_{\varepsilon}\right)
_{\varepsilon}\in\mathcal{M}_{\mathcal{O}_{M}}(\Omega)\,\left\vert
\,\forall\varphi\in\mathcal{S}\left(  \Omega\right)  ,\;\forall m\in
\mathbb{N}:\ \nu_{\varphi,0}\left(  f_{\varepsilon}\right)  =\mathrm{o}\left(
\varepsilon^{m}\right)  \right.  \right\}  .
\]
We have the same result as theorem 1.2.27 in \cite{GKOS}
concerning\ $\mathcal{N}_{\tau}\left(  \cdot\right)  $ (the proof is similar):

\begin{proposition}
\label{GHTTempNstar}If the open set $\Omega$ is a product of $d$ intervals,
$\mathcal{N}_{\mathcal{O}_{M}}\left(  \Omega\right)  $ is equal to
$\mathcal{N}_{\mathcal{O}_{M},0}\left(  \Omega\right)  \cap\mathcal{M}%
_{\mathcal{O}_{M}}\left(  \Omega\right)  $.
\end{proposition}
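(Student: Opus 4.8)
The inclusion $\mathcal{N}_{\mathcal{O}_{M}}(\Omega)\subset\mathcal{N}_{\mathcal{O}_{M},0}(\Omega)\cap\mathcal{M}_{\mathcal{O}_{M}}(\Omega)$ is immediate from the definitions, since a net satisfying the negligibility estimate $\nu_{\varphi,l}(f_{\varepsilon})=\mathrm{o}(\varepsilon^{m})$ for \emph{every} order $l$ in particular satisfies it for $l=0$ and is moderate. The whole content therefore lies in the reverse inclusion, and the plan is to prove by induction on $l\in\mathbb{N}$ that any $\left(f_{\varepsilon}\right)_{\varepsilon}\in\mathcal{N}_{\mathcal{O}_{M},0}(\Omega)\cap\mathcal{M}_{\mathcal{O}_{M}}(\Omega)$ satisfies $\nu_{\varphi,l}(f_{\varepsilon})=\mathrm{o}(\varepsilon^{m})$ for every $\varphi\in\mathcal{S}(\Omega)$ and every $m\in\mathbb{N}$. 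The case $l=0$ is exactly the defining property of $\mathcal{N}_{\mathcal{O}_{M},0}(\Omega)$. Since $\nu_{\varphi,l}$ is a supremum over all $\partial^{\alpha}$ with $\left\vert\alpha\right\vert\leq l$, and the orders $\leq l-1$ are already controlled by the induction hypothesis, it suffices in the inductive step to bound $\sup_{x\in\Omega}\left\vert\varphi(x)\,\partial^{\alpha}f_{\varepsilon}(x)\right\vert$ for a single fixed multi-index $\alpha$ with $\left\vert\alpha\right\vert=l\geq1$.

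For such $\alpha$, I would write $\alpha=\beta+e_{j}$ with $\left\vert\beta\right\vert=l-1$ and apply the one-dimensional Taylor formula to $g_{\varepsilon}=\partial^{\beta}f_{\varepsilon}$ in the $j$-th variable: for $x\in\Omega$ and a step $h$ with $\left[x,x+he_{j}\right]\subset\Omega$,
\[
\partial^{\alpha}f_{\varepsilon}(x)=\frac{g_{\varepsilon}(x+he_{j})-g_{\varepsilon}(x)}{h}-\frac{h}{2}\,\partial_{j}^{2}g_{\varepsilon}(x+\theta he_{j}),\qquad\theta\in(0,1),
\]
whence, multiplying by $\varphi(x)$,
\[
\left\vert\varphi(x)\,\partial^{\alpha}f_{\varepsilon}(x)\right\vert\leq\frac{2}{\left\vert h\right\vert}\sup_{\left\vert t\right\vert\leq\left\vert h\right\vert}\varphi(x)\left\vert\partial^{\beta}f_{\varepsilon}(x+te_{j})\right\vert+\frac{\left\vert h\right\vert}{2}\sup_{\left\vert t\right\vert\leq\left\vert h\right\vert}\varphi(x)\left\vert\partial^{\beta+2e_{j}}f_{\varepsilon}(x+te_{j})\right\vert.
\]
The first supremum carries order $l-1$ information, the second order $l+1$ information. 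After optimizing the free parameter $h$ one obtains a bound of the form $C\,(A_{\varepsilon}B_{\varepsilon})^{1/2}$, where $A_{\varepsilon}$ is controlled by the induction hypothesis (hence $\mathrm{o}(\varepsilon^{m})$ for all $m$) and $B_{\varepsilon}$ is controlled by moderateness (hence $\leq\varepsilon^{-N}$ for some $N$); this yields $\mathrm{o}(\varepsilon^{(m-N)/2})$, and since $m$ is arbitrary, $\mathrm{o}(\varepsilon^{m'})$ for every $m'$, closing the induction. Note that moderateness is genuinely needed here to keep $B_{\varepsilon}$ from overwhelming the interpolation, which is precisely why the intersection with $\mathcal{M}_{\mathcal{O}_{M}}(\Omega)$ appears.

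The delicate point, and the place where the hypothesis that $\Omega$ is a product of intervals enters, is the conversion of the \emph{shifted} quantities $\varphi(x)\left\vert\partial^{\gamma}f_{\varepsilon}(x+te_{j})\right\vert$ into the unshifted seminorms $\nu_{\psi,\left\vert\gamma\right\vert}(f_{\varepsilon})$. Given $\varphi\in\mathcal{S}(\Omega)$, I would produce a single $\psi\in\mathcal{S}(\Omega)$ with $\varphi(x)\leq C\,\psi(x+te_{j})$ for all $x\in\Omega$ and all $\left\vert t\right\vert\leq1$: the function $y\mapsto\sup_{\left\vert t\right\vert\leq1}\left\vert\varphi(y-te_{j})\right\vert$ is continuous, positive and rapidly decreasing (all its $p_{q,0}$ are finite, by Peetre's inequality applied to the bounded shift), so Lemma \ref{GHTOMSemi} furnishes a dominating $\psi\in\mathcal{S}(\Omega)$. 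Then $\varphi(x)\left\vert\partial^{\gamma}f_{\varepsilon}(x+te_{j})\right\vert\leq C\,\psi(x+te_{j})\left\vert\partial^{\gamma}f_{\varepsilon}(x+te_{j})\right\vert\leq C\,\nu_{\psi,\left\vert\gamma\right\vert}(f_{\varepsilon})$, which supplies the required $A_{\varepsilon}$ and $B_{\varepsilon}$. Finally, that $\Omega$ is a product of intervals guarantees that for each $x\in\Omega$ one may choose the sign of the step so that the whole segment $\left[x,x+he_{j}\right]$ stays in $\Omega$ as soon as $\left\vert h\right\vert$ is smaller than half the length of the $j$-th factor; since the optimal $h$ tends to $0$ with $\varepsilon$, this constraint is automatically met for $\varepsilon$ small, which is all the asymptotic estimate requires. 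I expect this interplay between the translation in Taylor's formula, the Schwartz domination of the shifted weight via Lemma \ref{GHTOMSemi}, and the coordinate geometry of $\Omega$ to be the only genuinely delicate part; everything else is routine bookkeeping.
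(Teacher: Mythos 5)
Your proposal is correct and follows essentially the same route as the paper, which simply invokes Theorem 1.2.27 of \cite{GKOS} (``the proof is similar''): that classical argument is precisely your induction on the derivative order via the one-dimensional Taylor formula with an optimized step, the box hypothesis keeping the segment inside $\Omega$, and your domination of the shifted Schwartz weight via Lemma \ref{GHTOMSemi} is exactly the adaptation (replacing Peetre's inequality, which handles the polynomial weights of $\mathcal{N}_{\tau}$) that the word ``similar'' glosses over. One line to repair: the optimized step $|h|=2(A_{\varepsilon}/B_{\varepsilon})^{1/2}$ need \emph{not} tend to $0$ with $\varepsilon$ (e.g.\ when $A_{\varepsilon}$ and $B_{\varepsilon}$ are comparable, or when $B_{\varepsilon}=0$), so instead cap $|h|$ at a fixed admissible length $c_{0}$; in that regime one has $A_{\varepsilon}>c_{0}^{2}B_{\varepsilon}/4$, hence the bound $2CA_{\varepsilon}/c_{0}+Cc_{0}B_{\varepsilon}/2\leq 4CA_{\varepsilon}/c_{0}$ is negligible anyway, and the induction closes as you describe.
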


This result renders easier the proof of the:

\begin{proposition}
\label{GHTTempEmbed}\cite{GKOS,Scarpa2} Consider $\rho\in\mathcal{S}%
(\mathbb{R}^{d})$ such that
\begin{equation}
\int\rho\left(  x\right)  \,\mathrm{d}x=1\ ;\ \ \forall\alpha\in\mathbb{N}%
^{d}\backslash\left\{  0\right\}  ,\ \int x^{\alpha}\rho\left(  x\right)
\,\mathrm{d}x=0. \label{GOMNellePEmb1}%
\end{equation}
Set%
\begin{equation}
\rho_{\varepsilon}\left(  x\right)  =\varepsilon^{-d}\rho\left(
x/\varepsilon^{-1}\right)  . \label{GOMNelleDEmb1}%
\end{equation}
$(i)$~The map
\[
\sigma_{\tau}:\mathcal{O}_{M}(\mathbb{R}^{d})\rightarrow\mathcal{G}_{\tau
}(\mathbb{R}^{d}),\ u\mapsto\left(  u\right)  _{\varepsilon}+\mathcal{N}%
_{\mathcal{O}_{M}}(\mathbb{R}^{d})
\]
is an embedding of differential algebras.\newline$(ii)$~The map
\[
\iota_{\tau}:\mathcal{S}^{\prime}(\mathbb{R}^{d})\rightarrow\mathcal{G}_{\tau
}(\mathbb{R}^{d}),\ T\mapsto\left(  T\ast\rho_{\varepsilon}\right)
_{\varepsilon}+\mathcal{N}_{\mathcal{O}_{M}}(\mathbb{R}^{d})
\]
is an embedding of differential vector spaces.\newline$(iii)$~Moreover,
$\iota_{\tau}|_{\mathcal{O}_{M}(\mathbb{R}^{d})}=\sigma_{\tau}$, which means
that the following diagram is commutative:%
\begin{equation}%
\begin{tabular}
[c]{ccccc}%
$\mathcal{O}_{M}(\mathbb{R}^{d})$ &  & $\overset{\sigma_{\tau}}%
{\longrightarrow}$ &  & $\mathcal{G}_{\tau}(\mathbb{R}^{d}).$\\
& $\searrow$ &  & $^{\iota_{\tau}}\nearrow$ & \\
&  & $\mathcal{S}^{\prime}(\mathbb{R}^{d})$ &  &
\end{tabular}
\ \label{GOMNelleDiag1}%
\end{equation}
(The arrow without name is the usual canonical embedding of $\mathcal{O}%
_{M}(\mathbb{R}^{d})$ into $\mathcal{S}^{\prime}(\mathbb{R}^{d})$.)
\end{proposition}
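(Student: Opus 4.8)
The plan is to treat the three items in order, reducing each to the classical convolution estimates and exploiting the simplified description of negligibility furnished by Proposition~\ref{GHTTempNstar}; throughout, moderateness and negligibility may be tested with either the family $(\nu_{\varphi,l})$ or, by Proposition~\ref{GOMNllePPP} and Corollary~\ref{GOMNlleCCC}, the classical family $(p_{-q,l})$. Item $(i)$ is essentially formal. For $u\in\mathcal{O}_M(\mathbb{R}^d)$ the constant net satisfies $\nu_{\varphi,l}\big((u)_\varepsilon\big)=\nu_{\varphi,l}(u)<+\infty$ independently of $\varepsilon$, so $(u)_\varepsilon\in\mathcal{M}_{\mathcal{O}_M}(\mathbb{R}^d)$; since all operations and derivations of $\mathcal{G}_\tau$ act component-wise, $\sigma_\tau$ is a morphism of differential algebras. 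Injectivity follows because $(u)_\varepsilon\in\mathcal{N}_{\mathcal{O}_M}(\mathbb{R}^d)$ forces the $\varepsilon$-independent number $\nu_{\varphi,l}(u)$ to be $\mathrm{o}(\varepsilon^m)$ for all $m$, hence to vanish for every $(\varphi,l)$, whence $u=0$.

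For item $(ii)$ I first check that $\iota_\tau$ is well defined, i.e.\ that $(T\ast\rho_\varepsilon)_\varepsilon\in\mathcal{M}_{\mathcal{O}_M}(\mathbb{R}^d)$. Recall the classical fact that $T\ast\rho_\varepsilon\in\mathcal{O}_M(\mathbb{R}^d)$ for $T\in\mathcal{S}'(\mathbb{R}^d)$ and $\rho\in\mathcal{S}(\mathbb{R}^d)$. Writing $\partial^\alpha(T\ast\rho_\varepsilon)(x)=\langle T_y,\partial^\alpha\rho_\varepsilon(x-y)\rangle$ and inserting the continuity estimate for $T$ on $\mathcal{S}(\mathbb{R}^d)$ (there are $C>0$ and $N,k\in\mathbb{N}$ with $|\langle T,\phi\rangle|\le C\sup_{|\gamma|\le k,\,y}\langle y\rangle^N|\partial^\gamma\phi(y)|$), a change of variables governed by the scaling \eqref{GOMNelleDEmb1} together with an elementary inequality $\langle x-y\rangle\le C\langle x\rangle\langle y\rangle$ yields a bound $|\partial^\alpha(T\ast\rho_\varepsilon)(x)|\le C_\alpha\,\varepsilon^{-N_\alpha}\langle x\rangle^N$ for suitable $N_\alpha$. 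Multiplying by $\varphi\in\mathcal{S}(\mathbb{R}^d)$ and taking the supremum over $x$ and $|\alpha|\le l$ gives $\nu_{\varphi,l}(T\ast\rho_\varepsilon)=\mathrm{O}(\varepsilon^{-N_l})$, so the net is moderate. Linearity is immediate, and $\partial^\alpha(T\ast\rho_\varepsilon)=(\partial^\alpha T)\ast\rho_\varepsilon$ shows that $\iota_\tau$ commutes with the derivations, so it is a morphism of differential vector spaces.

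The substantive point of $(ii)$ is injectivity. Assume $(T\ast\rho_\varepsilon)_\varepsilon\in\mathcal{N}_{\mathcal{O}_M}(\mathbb{R}^d)$ and fix $\psi\in\mathcal{D}(\mathbb{R}^d)$. Choosing a strictly positive $\varphi\in\mathcal{S}(\mathbb{R}^d)$ and using only the order-zero negligibility estimate, one gets $|\langle T\ast\rho_\varepsilon,\psi\rangle|\le\nu_{\varphi,0}(T\ast\rho_\varepsilon)\int_{\mathbb{R}^d}|\psi(x)|\varphi(x)^{-1}\,\mathrm{d}x\longrightarrow 0$, the integral being finite because $\psi$ has compact support. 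On the other hand, $\langle T\ast\rho_\varepsilon,\psi\rangle=\langle T,\check\rho_\varepsilon\ast\psi\rangle\longrightarrow\langle T,\psi\rangle$, since $\check\rho_\varepsilon\ast\psi\to\psi$ in $\mathcal{S}(\mathbb{R}^d)$ — here the normalization $\int\rho=1$ from \eqref{GOMNellePEmb1} is what matters — and $T$ is continuous on $\mathcal{S}(\mathbb{R}^d)$. Comparing the two limits gives $\langle T,\psi\rangle=0$ for every $\psi\in\mathcal{D}(\mathbb{R}^d)$, hence $T=0$, so $\iota_\tau$ is an embedding.

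For item $(iii)$ I must show $(u\ast\rho_\varepsilon-u)_\varepsilon\in\mathcal{N}_{\mathcal{O}_M}(\mathbb{R}^d)$ for $u\in\mathcal{O}_M(\mathbb{R}^d)$. This net is moderate, being the difference of the two moderate nets produced in $(i)$ and $(ii)$, so Proposition~\ref{GHTTempNstar} reduces the task to the order-zero estimates $\nu_{\varphi,0}(u\ast\rho_\varepsilon-u)=\mathrm{o}(\varepsilon^m)$ for all $\varphi$ and $m$ — that is, to a pointwise estimate with no derivatives. Writing $(u\ast\rho_\varepsilon)(x)-u(x)=\int[u(x-y)-u(x)]\rho_\varepsilon(y)\,\mathrm{d}y$ and Taylor expanding $u$ at $x$ to order $M$, the zeroth-order term cancels by $\int\rho_\varepsilon=1$, all intermediate terms vanish because \eqref{GOMNellePEmb1} yields $\int y^\beta\rho_\varepsilon(y)\,\mathrm{d}y=0$ for $1\le|\beta|<M$, and the remainder is $\mathrm{O}(\varepsilon^M)$ with coefficients governed by the order-$M$ derivatives of $u$, which are slowly increasing; multiplying by $\varphi\in\mathcal{S}(\mathbb{R}^d)$ gives $\nu_{\varphi,0}(u\ast\rho_\varepsilon-u)=\mathrm{O}(\varepsilon^M)$, and since $M$ is arbitrary the net is negligible, so $\iota_\tau(u)=\sigma_\tau(u)$ and the diagram \eqref{GOMNelleDiag1} commutes. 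I expect the injectivity in $(ii)$ to be the main obstacle: it is the only step that is not a direct seminorm estimate but requires a duality-and-limit argument, and it is precisely there — through the order-zero reduction of Proposition~\ref{GHTTempNstar} — that the new description of $\mathcal{N}_{\mathcal{O}_M}$ lightens the work.
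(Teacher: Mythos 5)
Your proposal is correct and follows essentially the same route as the paper: item $(i)$ by the general formal argument of Subsection \ref{DSHCSubSec2.1}, and item $(iii)$ — the substantive new claim — by Taylor expansion combined with the moment conditions (\ref{GOMNellePEmb1}), bounding only the zeroth-order seminorms $\nu_{\varphi,0}$ and invoking the reduction of Proposition \ref{GHTTempNstar} together with moderateness to avoid estimating derivatives, exactly as the paper does. The only divergence is item $(ii)$, where the paper simply cites \cite{GKOS,Scarpa2} for a proof ``using the structure of elements of $\mathcal{S}^{\prime}(\mathbb{R}^{d})$'', while you spell that argument out (moderateness via the continuity estimate for $T$ plus Peetre's inequality, injectivity via the duality and mollification limit); this is the same classical argument the cited references employ, so it is a completion of the citation rather than a different method.
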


The assertion $\left(  iii\right)  $ is an improvement of the classical one
which only gives $\iota_{\tau}|_{\mathcal{O}_{M}(\mathbb{R}^{d})}=\sigma
_{\tau}|_{\mathcal{O}_{C}(\mathbb{R}^{d})}$.\medskip

\begin{proof}
The assertion $(i)$ is the application of the general principle recalled in
Subsection \ref{DSHCSubSec2.1} to the case of $\mathcal{O}_{M}\left(
\cdot\right)  $.\ We refer the reader to \cite{GKOS,Scarpa2} for the proof of
the assertion $(ii)$ which uses mainly the structure of elements of
$\mathcal{S}^{\prime}(\mathbb{R}^{d})$. We shall prove the assertion $(iii)$
in the case $d=1$, the general case only differs by more complicate algebraic
expressions. Let $f$ be in $\mathcal{O}_{M}\left(  \mathbb{R}\right)  $ and
set $\Delta=\iota_{\tau}\left(  f\right)  -\sigma_{\tau}\left(  f\right)  .$
One representative of $\Delta$ is given by $\left(  \Delta_{\varepsilon
}:\mathbb{R}\rightarrow\mathcal{M}_{\mathcal{O}_{M}}\left(  \mathbb{R}\right)
\right)  _{\varepsilon}$ with
\begin{multline*}
\Delta_{\varepsilon}\left(  y\right)  =\left(  f\ast\theta_{\varepsilon
}\right)  \left(  y\right)  -f(y)=\int f(y-x)\rho_{\varepsilon}(x)\,\mathrm{d}%
x-f(y)=\\
\int\left(  f(y-x)-f(y)\right)  \rho_{\varepsilon}(x)\,\mathrm{d}%
ux=\int\left(  f(y-\varepsilon u)-f(y)\right)  \rho(u)\,\mathrm{d}u
\end{multline*}
since $\int\rho_{\varepsilon}(x)\,\mathrm{d}x=1$. Let $k$ be a positive
integer. Taylor's formula gives
\[
f(y-\varepsilon u)-f(y)=\sum_{i=1}^{k}\frac{\left(  -\varepsilon u\right)
^{i}}{i!}f^{\left(  i\right)  }\left(  y\right)  +\frac{\left(  -\varepsilon
u\right)  ^{k}}{k!}\int_{0}^{1}f^{\left(  k+1\right)  }\left(  y-\varepsilon
uv\right)  \left(  1-v\right)  ^{k}\,\mathrm{d}v.
\]
Using $\int x^{i}\rho_{\varepsilon}(x)\,\mathrm{d}x=0$, for $i\in\left\{
1,\ldots,k\right\}  $, we get
\[
\Delta_{\varepsilon}(y)=\int\frac{\left(  -\varepsilon u\right)  ^{k}}{k!}%
\int_{0}^{1}f^{\left(  k+1\right)  }\left(  y-\varepsilon uv\right)  \left(
1-v\right)  ^{k}\,\mathrm{d}v\,\rho(u)\,\mathrm{d}u.
\]
As $f\in\mathcal{O}_{M}\left(  \mathbb{R}\right)  $, there exists
$p\in\mathbb{N}$ and $C_{3}>0$ such that $\left\vert f^{\left(  k+1\right)
}\left(  \xi\right)  \right\vert \leq C_{3}\,\left(  1+\left\vert
\xi\right\vert \right)  ^{p}$. Thus%
\[
\forall\left(  u,y\right)  \in\mathbb{R}^{2},\ \forall v\in\left[  0,1\right]
,\ \forall\varepsilon\in\left(  0,1\right]  ,\ \ \left\vert f^{\left(
k+1\right)  }\left(  y-\varepsilon uv\right)  \right\vert \leq C_{3}\,\left(
1+\left\vert y\right\vert \right)  ^{p}\left(  1+\left\vert u\right\vert
\right)  ^{p}.
\]
As $\rho$ is rapidly decreasing, the integral $\int\left\vert u\right\vert
^{k}\left(  1+\left\vert u\right\vert \right)  ^{p}\rho(u)\,\mathrm{d}u$
converges and
\[
\left\vert \Delta_{\varepsilon}(y)\right\vert \leq\frac{\varepsilon^{k}}%
{k!}C_{3}\left(  1+\left\vert y\right\vert \right)  ^{p}\int\left\vert
u\right\vert ^{k}\left(  1+\left\vert u\right\vert \right)  ^{p}%
\rho(u)\,\mathrm{d}u\leq\varepsilon^{k}C_{4}\left(  1+\left\vert y\right\vert
\right)  ^{p}.
\]
Consider $\varphi\in\mathcal{S}\left(  \mathbb{R}\right)  $. The function
$\left(  1+\left\vert \cdot\right\vert \right)  ^{p}\left\vert \varphi\left(
\cdot\right)  \right\vert $ is bounded.\ Thus
\[
\sup_{y\in\mathbb{R}}\left\vert \varphi\left(  y\right)  \Delta_{\varepsilon
}(y)\right\vert =\mathrm{o}\left(  \varepsilon^{k}\right)  \;\text{as
}\varepsilon\rightarrow0.
\]
As $\left(  \Delta_{\varepsilon}\right)  _{\varepsilon}\in\mathcal{M}%
_{\mathcal{O}_{M}}\left(  \mathbb{R}\right)  $ and $\sup_{y\in\mathbb{R}%
}\left\vert \varphi\left(  y\right)  \Delta_{\varepsilon}(y)\right\vert
=\mathrm{o}\left(  \varepsilon^{k}\right)  $, we can conclude without
estimating the derivatives that $\left(  \Delta_{\varepsilon}\right)
_{\varepsilon}\in\mathcal{N}_{\mathcal{O}_{M}}(\mathbb{R})$ by using
Proposition \ref{GHTTempNstar}.
\end{proof}

\section{Fourier Transform and space of rapidly decreasing generalized
distributions\label{DSHCsec4}}

There is no need to recall the importance of spectral analysis, based on the
Fourier transform in the theories of distributions \cite{HorPDOT1} and
Colombeau generalized functions (See, for example,
\cite{ADRapDec,HorDeH,HorKun,Scarpa3}).\ In this section, we first define in a
new way the Fourier transform of elements of $\mathcal{G}_{\tau}%
(\mathbb{R}^{d})$ in relationship with a (new) space of generalized
distributions. \medskip

Classically, the Fourier transform of elements of $\mathcal{G}_{\mathcal{\tau
}}(\Omega)$ is defined with the help of ad hoc cutoff functions
\cite{Scarpa1,Scarpa2}.\ More precisely, one sets%
\[
\forall u\in\mathcal{G}_{\tau}(\Omega),\ \mathcal{F}\left(  u\right)
=\int\operatorname{e}^{-\imath xy}u_{\varepsilon}\left(  y\right)
\widehat{\rho}\left(  \varepsilon y\right)  \,\mathrm{d}y+\mathcal{N}%
_{\mathcal{\tau}}(\mathbb{R}^{d})\ \ \mathrm{with}\ \ \left(  u_{\varepsilon
}\right)  _{\varepsilon}\in u,
\]
where $\rho\in\mathcal{S}(\mathbb{R}^{d})$ satisfies (\ref{GOMNellePEmb1}) so
that $\widehat{\rho}\left(  \varepsilon y\right)  \overset{\varepsilon
\rightarrow0}{\longrightarrow}1$. One shows that this definition makes sense
for $\mathcal{F}\left(  u\right)  $ does not depend on the chosen
representative $\left(  u_{\varepsilon}\right)  _{\varepsilon}\in
u$.\ Analogously, one defines $\mathcal{F}^{-1}$. However, this Fourier
Transform lacks some expected properties. (The reader will find a complete
discussion on this subject in \cite{Scarpa2}.)\medskip

Recalling that $\mathcal{O}_{M}(\mathbb{R}^{d})$ is the Fourier image of
$\mathcal{O}_{C}^{\prime}(\mathbb{R}^{d})$ (and reciprocally), we prefer here
to construct the Fourier transform starting from this fact since
$\mathcal{G}_{\tau}(\mathbb{R}^{d})$ is directly built on $\mathcal{O}%
_{M}(\mathbb{R}^{d})$. In other words, we consider $\mathcal{G}_{\tau
}(\mathbb{R}^{d})$ as a space of multiplicators and we introduce a space of
convolutors, both of them being linked as usual by the Fourier Transform and
its inverse.\medskip

Set%
\begin{align*}
\mathcal{M}_{\mathcal{O}_{C}^{\prime}}(\mathbb{R}^{d})  &  =\left\{  \left(
T_{\varepsilon}\right)  _{\varepsilon}\in\mathcal{O}_{C}^{\prime}%
(\mathbb{R}^{d})^{\left(  0,1\right]  }\,\left\vert \,(\mathcal{F}^{-1}\left(
T_{\varepsilon}\right)  )_{\varepsilon}\in\mathcal{M}_{\mathcal{O}_{M}%
}(\mathbb{R}^{d})\right.  \right\}  ,\\
\mathcal{N}_{\mathcal{O}_{C}^{\prime}}(\mathbb{R}^{d})  &  =\left\{  \left(
T_{\varepsilon}\right)  _{\varepsilon}\in\mathcal{O}_{C}^{\prime}%
(\mathbb{R}^{d})^{\left(  0,1\right]  }\,\left\vert \,(\mathcal{F}^{-1}\left(
T_{\varepsilon}\right)  )_{\varepsilon}\in\mathcal{N}_{\mathcal{O}_{M}%
}(\mathbb{R}^{d})\right.  \right\}  .
\end{align*}
From the linearity of $\mathcal{F}^{-1}$ and the linear properties of the
spaces $\mathcal{M}_{\mathcal{O}_{M}}(\mathbb{R}^{d})$ and $\mathcal{N}%
_{\mathcal{O}_{M}}(\mathbb{R}^{d})$, we immediately get that $\mathcal{M}%
_{\mathcal{O}_{C}^{\prime}}(\mathbb{R}^{d})$ is a $\widetilde{\mathbb{C}}%
$-submodule (resp. $\mathbb{C}$-subvector space) of $\mathcal{O}_{C}^{\prime
}(\mathbb{R}^{d})^{\left(  0,1\right]  }$ and $\mathcal{N}_{\mathcal{O}%
_{C}^{\prime}}(\mathbb{R}^{d})$ a $\widetilde{\mathbb{C}}$-submodule (resp.
$\mathbb{C}$-subvector space) of $\mathcal{M}_{\mathcal{O}_{C}^{\prime}%
}(\mathbb{R}^{d})$.\ 

\begin{definition}
\label{GOMnelledefGOCprime}The factor space $\mathcal{G}_{\mathcal{O}%
_{C}^{\prime}}(\mathbb{R}^{d})=\mathcal{M}_{\mathcal{O}_{C}^{\prime}%
}(\mathbb{R}^{d})/\mathcal{N}_{\mathcal{O}_{C}^{\prime}}(\mathbb{R}^{d})$ is
called the space of \emph{rapidly decreasing generalized distributions}.
\end{definition}

With this previous material, the Fourier transform of elements of
$\mathcal{G}_{\tau}(\mathbb{R}^{d})$ is well defined by%
\[
\forall u\in\mathcal{G}_{\tau}(\mathbb{R}^{d}),\ \mathcal{F}\left(  u\right)
=\mathcal{F}\left(  u_{\varepsilon}\right)  +\mathcal{N}_{\mathcal{O}%
_{C}^{\prime}}(\mathbb{R}^{d})\ \ \mathrm{with}\ \left(  u_{\varepsilon
}\right)  _{\varepsilon}\in u.
\]
The inverse Fourier transform from $\mathcal{G}_{\mathcal{O}_{C}^{\prime}%
}(\mathbb{R}^{d})$ into $\mathcal{G}_{\tau}(\mathbb{R}^{d})$ is defined
analogously. This Fourier transform has the expected properties as they only
have to be verified component-wise.

\begin{proposition}
\label{GOMNellePEmb2}~\newline(i)~The map
\[
\sigma_{\mathcal{O}_{C}^{\prime}}:\mathcal{O}_{C}^{\prime}(\mathbb{R}%
^{d})\rightarrow\mathcal{G}_{\mathcal{O}_{C}^{\prime}}(\mathbb{R}%
^{d}),\ u\mapsto\left(  u\right)  _{\varepsilon}+\mathcal{N}_{\mathcal{O}%
_{C}^{\prime}}(\mathbb{R}^{d})
\]
is an embedding of $\mathbb{C}$-vector spaces.\newline(ii)~Take, as in
Proposition \ref{GHTTempEmbed}, $\rho\in\mathcal{S}(\mathbb{R}^{d})$
satisfying (\ref{GOMNellePEmb1}) and $\left(  \rho_{\varepsilon}\right)
_{\varepsilon}$ defined by (\ref{GOMNelleDEmb1}). The map
\[
\iota_{\mathcal{O}_{C}^{\prime}}:\mathcal{S}_{C}^{\prime}(\mathbb{R}%
^{d})\rightarrow\mathcal{G}_{\mathcal{O}_{C}^{\prime}}(\mathbb{R}%
^{d}),\ T\mapsto\left(  T\widehat{\rho}\left(  \varepsilon\cdot\right)
\right)  _{\varepsilon}+\mathcal{N}_{\mathcal{O}_{C}^{\prime}}(\mathbb{R}%
^{d})
\]
is an embedding of $\mathbb{C}$-vector spaces.
\end{proposition}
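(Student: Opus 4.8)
The plan is to prove both embeddings by transport of structure along the Fourier transform, reducing everything to the already-established Proposition~\ref{GHTTempEmbed}. The starting observation is that, by the very definition of $\mathcal{M}_{\mathcal{O}_{C}^{\prime}}(\mathbb{R}^{d})$ and $\mathcal{N}_{\mathcal{O}_{C}^{\prime}}(\mathbb{R}^{d})$, the componentwise inverse Fourier transform $(T_{\varepsilon})_{\varepsilon}\mapsto(\mathcal{F}^{-1}(T_{\varepsilon}))_{\varepsilon}$ is nothing but the lift of the classical linear isomorphism $\mathcal{F}^{-1}:\mathcal{O}_{C}^{\prime}(\mathbb{R}^{d})\to\mathcal{O}_{M}(\mathbb{R}^{d})$. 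Since the defining conditions say precisely that this net-level map carries $\mathcal{M}_{\mathcal{O}_{C}^{\prime}}$ onto $\mathcal{M}_{\mathcal{O}_{M}}$ and $\mathcal{N}_{\mathcal{O}_{C}^{\prime}}$ onto $\mathcal{N}_{\mathcal{O}_{M}}$, it descends to a linear bijection $\mathcal{F}^{-1}:\mathcal{G}_{\mathcal{O}_{C}^{\prime}}(\mathbb{R}^{d})\to\mathcal{G}_{\tau}(\mathbb{R}^{d})$ on the quotients, namely the induced inverse Fourier transform discussed after Definition~\ref{GOMnelledefGOCprime}. Injectivity of $\sigma_{\mathcal{O}_{C}^{\prime}}$ and of $\iota_{\mathcal{O}_{C}^{\prime}}$ will then follow from injectivity of $\sigma_{\tau}$ and $\iota_{\tau}$ once I check that the relevant squares commute.

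For $(i)$, first I would note that $\sigma_{\mathcal{O}_{C}^{\prime}}$ is well defined: for $u\in\mathcal{O}_{C}^{\prime}(\mathbb{R}^{d})$ the constant net $(u)_{\varepsilon}$ lies in $\mathcal{M}_{\mathcal{O}_{C}^{\prime}}(\mathbb{R}^{d})$ because $(\mathcal{F}^{-1}u)_{\varepsilon}$ is a constant net in $\mathcal{O}_{M}(\mathbb{R}^{d})$, hence moderate (this is the well-definedness of $\sigma_{\tau}$). Applying the induced $\mathcal{F}^{-1}$ componentwise and using that it commutes with the constant-net operation gives
\[
\mathcal{F}^{-1}\circ\sigma_{\mathcal{O}_{C}^{\prime}}=\sigma_{\tau}\circ\mathcal{F}^{-1}|_{\mathcal{O}_{C}^{\prime}(\mathbb{R}^{d})}.
\]
The right-hand side is injective, being the composite of the bijection $\mathcal{F}^{-1}|_{\mathcal{O}_{C}^{\prime}}:\mathcal{O}_{C}^{\prime}\to\mathcal{O}_{M}$ with the embedding $\sigma_{\tau}$ of Proposition~\ref{GHTTempEmbed}$(i)$; since $\mathcal{F}^{-1}$ is a bijection on the generalized-function level, $\sigma_{\mathcal{O}_{C}^{\prime}}$ is injective, i.e. an embedding of $\mathbb{C}$-vector spaces.

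For $(ii)$, the key computation is the exchange of multiplication and convolution under Fourier transform. Writing $T=\mathcal{F}^{-1}S$ and using $\mathcal{F}(\rho_{\varepsilon})=\widehat{\rho}(\varepsilon\,\cdot)$, I would verify, up to the usual normalisation constant,
\[
\mathcal{F}^{-1}\bigl(S\,\widehat{\rho}(\varepsilon\,\cdot)\bigr)=(\mathcal{F}^{-1}S)\ast\rho_{\varepsilon}=T\ast\rho_{\varepsilon},
\]
which is legitimate because $S\in\mathcal{S}^{\prime}(\mathbb{R}^{d})$ and $\widehat{\rho}(\varepsilon\,\cdot)\in\mathcal{S}(\mathbb{R}^{d})$, so that $S\,\widehat{\rho}(\varepsilon\,\cdot)\in\mathcal{O}_{C}^{\prime}(\mathbb{R}^{d})$ and the product--convolution formula applies. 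This simultaneously shows that $\iota_{\mathcal{O}_{C}^{\prime}}(S)$ lands in $\mathcal{M}_{\mathcal{O}_{C}^{\prime}}(\mathbb{R}^{d})$, since $(T\ast\rho_{\varepsilon})_{\varepsilon}\in\mathcal{M}_{\mathcal{O}_{M}}(\mathbb{R}^{d})$ by the well-definedness of $\iota_{\tau}$, and it yields
\[
\mathcal{F}^{-1}\circ\iota_{\mathcal{O}_{C}^{\prime}}=\iota_{\tau}\circ\mathcal{F}^{-1}|_{\mathcal{S}^{\prime}(\mathbb{R}^{d})}.
\]
As $\mathcal{F}^{-1}:\mathcal{S}^{\prime}\to\mathcal{S}^{\prime}$ is a bijection and $\iota_{\tau}$ is injective by Proposition~\ref{GHTTempEmbed}$(ii)$, the right-hand side is injective; composing with the bijection $\mathcal{F}^{-1}$ on $\mathcal{G}_{\mathcal{O}_{C}^{\prime}}$ gives injectivity of $\iota_{\mathcal{O}_{C}^{\prime}}$.

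The main obstacle is not the formal diagram-chase but the two classical analytic inputs on which it rests: that $\mathcal{F}$ is a linear isomorphism between $\mathcal{O}_{M}(\mathbb{R}^{d})$ and $\mathcal{O}_{C}^{\prime}(\mathbb{R}^{d})$ (which legitimises the net-level definitions of $\mathcal{M}_{\mathcal{O}_{C}^{\prime}}$ and $\mathcal{N}_{\mathcal{O}_{C}^{\prime}}$), and the precise product--convolution exchange $\mathcal{F}(T\ast\rho_{\varepsilon})=\widehat{T}\,\widehat{\rho}(\varepsilon\,\cdot)$ with $\widehat{T}\in\mathcal{S}^{\prime}$ and $\widehat{T}\,\widehat{\rho}(\varepsilon\,\cdot)\in\mathcal{O}_{C}^{\prime}$. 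Once these are in hand, both assertions are purely structural, since all the analytic work---moderateness, negligibility, and injectivity---has already been carried out for $\sigma_{\tau}$ and $\iota_{\tau}$ in Proposition~\ref{GHTTempEmbed} and is merely transported through the Fourier isomorphism.
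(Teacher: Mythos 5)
Your proof is correct and follows essentially the same route as the paper: the paper likewise declares $(i)$ immediate and obtains $(ii)$ by ``taking the Fourier transform image'' of the diagram (\ref{GOMNelleDiag1}) from Proposition \ref{GHTTempEmbed}, i.e.\ exactly your transport of $\sigma_{\tau}$ and $\iota_{\tau}$ through the isomorphism $\mathcal{F}^{-1}:\mathcal{O}_{C}^{\prime}\rightarrow\mathcal{O}_{M}$ and its induced bijection $\mathcal{G}_{\mathcal{O}_{C}^{\prime}}(\mathbb{R}^{d})\rightarrow\mathcal{G}_{\tau}(\mathbb{R}^{d})$. Your write-up merely makes explicit (via the product--convolution exchange $\mathcal{F}^{-1}(S\,\widehat{\rho}(\varepsilon\cdot))=(\mathcal{F}^{-1}S)\ast\rho_{\varepsilon}$) the commutativity that the paper records only as a diagram.
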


The \textbf{proof} of \emph{(i)} is immediate, whereas \emph{(ii)} is obtained
by "taking the Fourier transform image of the diagram (\ref{GOMNelleDiag1})"
in Proposition \ref{GHTTempEmbed}. In fact, the following diagram is
commutative%
\[%
\begin{tabular}
[c]{ccccc}%
$\mathcal{O}_{M}(\mathbb{R}^{d})$ &  & $\overset{\sigma_{\tau}}%
{\longrightarrow}$ &  & $\mathcal{G}_{\tau}(\mathbb{R}^{d})$\\
& $\searrow$ &  & $^{\iota_{\tau}}\nearrow$ & \\
$^{\mathcal{F}}\downarrow\mathcal{\,}\uparrow^{\mathcal{F}^{-1}}$ &  &
$\mathcal{S}^{\prime}(\mathbb{R}^{d})$ &  & $^{\mathcal{F}}\downarrow
\mathcal{\,}\uparrow^{\mathcal{F}^{-1}}$\\
& $^{\mathcal{F}^{-1}}\nearrow$ &  & $^{\iota_{\mathcal{O}_{C}^{\prime}}%
\circ\mathcal{F}}\searrow$ & \\
$\mathcal{O}_{C}^{\prime}(\mathbb{R}^{d})$ &  & $\overset{\sigma
_{\mathcal{O}_{C}^{\prime}}}{\longrightarrow}$ &  & $\mathcal{G}%
_{\mathcal{O}_{C}^{\prime}}(\mathbb{R}^{d})$%
\end{tabular}
\ \
\]
(The arrow without name is the usual canonical embedding of $\mathcal{O}%
_{M}(\mathbb{R}^{d})$ into $\mathcal{S}^{\prime}(\mathbb{R}^{d})$.)

\begin{remark}
\label{GOMnelleRemHStype}Following ideas of Jean-Andr\'{e} Marti (private
communication), the Fourier transform in $\mathcal{G}_{\tau}(\mathbb{R}^{d})$
can be used to define Sobolev type subspaces of $\mathcal{G}_{\tau}%
(\mathbb{R}^{d})$.\ More precisely, we say that $\left(  u_{\varepsilon
}\right)  _{\varepsilon}\in\mathcal{M}_{\mathcal{O}_{M}}(\mathbb{R}^{d})$ is
of $H^{s}$\emph{ type} if, for all $\varepsilon\in\left(  0,1\right]  $,
$\left\langle \cdot\right\rangle ^{s}(\cdot)\widehat{u}_{\varepsilon}%
(\cdot)\in\mathrm{L}^{2}(\mathbb{R}^{d})$ and $\left(  \left\Vert \left\langle
\cdot\right\rangle ^{s}\widehat{u}_{\varepsilon}(\cdot)\right\Vert
_{\mathrm{L}^{2}}\right)  _{\varepsilon}\in\mathcal{M}\left(  \mathbb{R}%
\right)  $. One shows that the space $H^{s}(\mathbb{R}^{d})$ is embedded into
$\mathcal{G}_{\tau}^{(s)}(\mathbb{R}^{d})$ through $\iota_{\tau}$ defined in
Proposition \ref{GHTTempEmbed}. This will be used in a forthcoming paper to
introduce a $H^{s}$ local and microlocal analysis in spaces of generalized functions.
\end{remark}

\section{Introduction to regularity theory\label{DSHCsec5}}

\subsection{The spaces $\mathcal{M}_{\tau}^{\infty}(\Omega)$ and
$\mathcal{G}_{\tau}^{\infty}(\Omega)$}

In analogy to the definition of $\mathcal{G}^{\infty}$ \cite{GKOS,Ober1}, we
set%
\[
\mathcal{M}_{\mathcal{O}_{M}}^{\infty}(\Omega)=\left\{  \left(  f_{\varepsilon
}\right)  _{\varepsilon}\in\mathcal{O}_{M}\left(  \Omega\right)  ^{\left(
0,1\right]  }\,\left\vert \,\forall\varphi\in\mathcal{S}\left(  \Omega\right)
,\ \exists m\in\mathbb{N},\ \forall l\in\mathbb{N}:\nu_{\varphi,l}%
(f_{\varepsilon})=\mathrm{o}\left(  \varepsilon^{-m}\right)  \right.
\right\}  .
\]
It is easy to check that $\mathcal{M}_{\mathcal{O}_{M}}^{\infty}\left(
\cdot\right)  $ is a subpreasheaf of algebras of $\mathcal{M}_{\mathcal{O}%
_{M}}\left(  \cdot\right)  $.\ From this, we get the:

\begin{proposition}
\label{GOMNELLEPinfty1}$\mathcal{G}_{\tau}^{\infty}\left(  \cdot\right)
=\mathcal{M}_{\mathcal{O}_{M}}^{\infty}\left(  \cdot\right)  /\mathcal{N}%
_{\mathcal{O}_{M}}^{\infty}\left(  \cdot\right)  $ is a subpresheaf of
differential algebras of $\mathcal{G}_{\tau}\left(  \cdot\right)  $.
\end{proposition}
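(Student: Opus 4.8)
The plan is to reduce the whole statement to two observations: that the \emph{regular negligible} space coincides with the ordinary negligible space, and that the canonical map between the quotients is injective. First I would make explicit the definition left implicit in the statement, namely
$\mathcal{N}_{\mathcal{O}_{M}}^{\infty}(\Omega)=\mathcal{N}_{\mathcal{O}_{M}}(\Omega)\cap\mathcal{M}_{\mathcal{O}_{M}}^{\infty}(\Omega)$, which is the exact analogue of the definition of $\mathcal{G}^{\infty}$ from \cite{GKOS,Ober1}. The key elementary remark is the inclusion $\mathcal{N}_{\mathcal{O}_{M}}(\Omega)\subset\mathcal{M}_{\mathcal{O}_{M}}^{\infty}(\Omega)$: if $(f_{\varepsilon})_{\varepsilon}$ is negligible then for every $\varphi\in\mathcal{S}(\Omega)$ and every $l$ one has $\nu_{\varphi,l}(f_{\varepsilon})=\mathrm{o}(\varepsilon^{m})$ for \emph{all} $m$; choosing $m=0$ furnishes a single exponent (here $m=0$) valid simultaneously for all $l$, which is precisely the regularity requirement $\forall\varphi,\ \exists m,\ \forall l$. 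Consequently $\mathcal{N}_{\mathcal{O}_{M}}^{\infty}(\Omega)=\mathcal{N}_{\mathcal{O}_{M}}(\Omega)$, and $\mathcal{G}_{\tau}^{\infty}(\Omega)=\mathcal{M}_{\mathcal{O}_{M}}^{\infty}(\Omega)/\mathcal{N}_{\mathcal{O}_{M}}(\Omega)$.

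Granting the quoted easy check that $\mathcal{M}_{\mathcal{O}_{M}}^{\infty}(\cdot)$ is a subpresheaf of algebras of $\mathcal{M}_{\mathcal{O}_{M}}(\cdot)$, the algebraic part is then routine. Since $\mathcal{N}_{\mathcal{O}_{M}}(\cdot)$ is a presheaf of ideals of $\mathcal{M}_{\mathcal{O}_{M}}(\cdot)$ by Proposition \ref{GOMNELLEP1} and $\mathcal{M}_{\mathcal{O}_{M}}^{\infty}\subset\mathcal{M}_{\mathcal{O}_{M}}$, the product of a regular moderate net with a negligible net is negligible, hence by the inclusion above lies again in $\mathcal{N}_{\mathcal{O}_{M}}^{\infty}$; so $\mathcal{N}_{\mathcal{O}_{M}}^{\infty}$ is an ideal of $\mathcal{M}_{\mathcal{O}_{M}}^{\infty}$, and the quotient $\mathcal{G}_{\tau}^{\infty}(\cdot)$ is a presheaf of algebras with restrictions inherited from those of $\mathcal{M}_{\mathcal{O}_{M}}^{\infty}(\cdot)$ passed to the quotient.

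Next I would produce the embedding. On each open $\Omega$ define $\mathcal{G}_{\tau}^{\infty}(\Omega)\rightarrow\mathcal{G}_{\tau}(\Omega)$ by $f+\mathcal{N}_{\mathcal{O}_{M}}^{\infty}(\Omega)\mapsto f+\mathcal{N}_{\mathcal{O}_{M}}(\Omega)$. Well-definedness follows from $\mathcal{N}_{\mathcal{O}_{M}}^{\infty}\subset\mathcal{N}_{\mathcal{O}_{M}}$, it is clearly an algebra homomorphism, and injectivity is exactly the identity $\mathcal{M}_{\mathcal{O}_{M}}^{\infty}\cap\mathcal{N}_{\mathcal{O}_{M}}=\mathcal{N}_{\mathcal{O}_{M}}^{\infty}$, which holds by the very definition of $\mathcal{N}_{\mathcal{O}_{M}}^{\infty}$. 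Because both presheaf structures are induced by the same component-wise restriction of nets, this family of maps commutes with restrictions, so $\mathcal{G}_{\tau}^{\infty}(\cdot)$ is a subpresheaf of algebras of $\mathcal{G}_{\tau}(\cdot)$.

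It remains to add the differential structure, and this is the point I would treat with the most care. Using $\nu_{\varphi,l}(\partial^{\alpha}f_{\varepsilon})\leq\nu_{\varphi,l+|\alpha|}(f_{\varepsilon})$, a net in $\mathcal{M}_{\mathcal{O}_{M}}^{\infty}$ is mapped by $\partial^{\alpha}$ to a net again in $\mathcal{M}_{\mathcal{O}_{M}}^{\infty}$ \emph{with the same exponent} $m$, and likewise for $\mathcal{N}_{\mathcal{O}_{M}}^{\infty}$; hence each $\partial^{\alpha}$ restricts from $\mathcal{G}_{\tau}$ to $\mathcal{G}_{\tau}^{\infty}$ and the embedding is a morphism of differential algebras. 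The only genuinely delicate feature — and the heart of the statement — is that the quantifier order $\forall\varphi,\ \exists m,\ \forall l$ defining $\mathcal{M}_{\mathcal{O}_{M}}^{\infty}$ survives differentiation \emph{precisely because} $m$ is chosen uniformly in $l$; this is exactly what distinguishes $\mathcal{M}_{\mathcal{O}_{M}}^{\infty}$ from $\mathcal{M}_{\mathcal{O}_{M}}$ (where $m$ may grow with $l$) and what makes $\mathcal{G}_{\tau}^{\infty}$ stable under all $\partial^{\alpha}$, thereby a subpresheaf of differential algebras of $\mathcal{G}_{\tau}$.
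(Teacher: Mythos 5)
Your proposal is correct and takes essentially the same route as the paper, whose entire argument is the one-line assertion that $\mathcal{M}_{\mathcal{O}_{M}}^{\infty}\left(\cdot\right)$ is a subpresheaf of algebras of $\mathcal{M}_{\mathcal{O}_{M}}\left(\cdot\right)$, from which the proposition is declared to follow. You simply fill in the routine details the paper leaves implicit (the identification $\mathcal{N}_{\mathcal{O}_{M}}^{\infty}\left(\Omega\right)=\mathcal{N}_{\mathcal{O}_{M}}\left(\Omega\right)$, the ideal property, injectivity of the canonical map, and stability of the uniform exponent $m$ under $\partial^{\alpha}$ via $\nu_{\varphi,l}(\partial^{\alpha}f_{\varepsilon})\leq\nu_{\varphi,l+\left\vert\alpha\right\vert}(f_{\varepsilon})$), which is exactly what the paper's ``easy to check'' covers.
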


Going further with the above mentioned analogy, we recall that $\mathcal{G}%
^{\infty}(\mathbb{R}^{d})\cap\mathcal{D}^{\prime}(\mathbb{R}^{d}%
)=\mathrm{C}^{\infty}(\mathbb{R}^{d})$ \cite{Ober1}.\ This result can be
interpreted as follows: The subsheaf $\mathcal{G}^{\infty}$ of regular
sections of $\mathcal{G}$ is such that the sheaf embedding $\mathcal{G}%
^{\infty}\rightarrow\mathcal{G}$ is the natural extension of the classical one
$\mathrm{C}^{\infty}\rightarrow\mathcal{D}^{\prime}$. We have here the same
situation (\emph{modulo} the fact $\mathcal{G}_{\tau}\left(  \cdot\right)  $
is only a presheaf) that given by the:

\begin{proposition}
\label{GOMNELLEPinter}$\mathcal{G}_{\tau}^{\infty}(\mathbb{R}^{d}%
)\cap\mathcal{S}^{\prime}(\mathbb{R}^{d})=\mathcal{O}_{M}(\mathbb{R}^{d}).$
\end{proposition}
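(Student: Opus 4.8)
The plan is to prove the equality $\mathcal{G}_{\tau}^{\infty}(\mathbb{R}^{d})\cap\mathcal{S}^{\prime}(\mathbb{R}^{d})=\mathcal{O}_{M}(\mathbb{R}^{d})$ by a double inclusion, where the intersection is understood via the embedding $\iota_{\tau}$ of Proposition \ref{GHTTempEmbed}. The inclusion $\mathcal{O}_{M}(\mathbb{R}^{d})\subset\mathcal{G}_{\tau}^{\infty}(\mathbb{R}^{d})\cap\mathcal{S}^{\prime}(\mathbb{R}^{d})$ is the easy direction. First I would note that $\mathcal{O}_{M}(\mathbb{R}^{d})$ sits inside $\mathcal{S}^{\prime}(\mathbb{R}^{d})$ by the canonical embedding, so the only point is to check that any $f\in\mathcal{O}_{M}(\mathbb{R}^{d})$ maps into $\mathcal{G}_{\tau}^{\infty}(\mathbb{R}^{d})$ under $\iota_{\tau}$. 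By the commutative diagram (\ref{GOMNelleDiag1}), $\iota_{\tau}(f)=\sigma_{\tau}(f)$, represented by the constant net $(f)_{\varepsilon}$. For such a net, each $\nu_{\varphi,l}(f)$ is a fixed finite number independent of $\varepsilon$, so the defining condition of $\mathcal{M}_{\mathcal{O}_{M}}^{\infty}$ (one $m$, say $m=0$, works for all $l$) is trivially satisfied. Hence $\sigma_{\tau}(f)\in\mathcal{G}_{\tau}^{\infty}(\mathbb{R}^{d})$.

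The substantial direction is $\mathcal{G}_{\tau}^{\infty}(\mathbb{R}^{d})\cap\mathcal{S}^{\prime}(\mathbb{R}^{d})\subset\mathcal{O}_{M}(\mathbb{R}^{d})$. Suppose $T\in\mathcal{S}^{\prime}(\mathbb{R}^{d})$ is such that $\iota_{\tau}(T)\in\mathcal{G}_{\tau}^{\infty}(\mathbb{R}^{d})$; I must show $T$ is (the distribution associated to) a function in $\mathcal{O}_{M}(\mathbb{R}^{d})$. The plan is to exploit the regularizing net $(T\ast\rho_{\varepsilon})_{\varepsilon}$, which represents $\iota_{\tau}(T)$ and consists of smooth functions. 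The $\mathcal{G}_{\tau}^{\infty}$ membership means there is a \emph{single} moderation exponent $m$ governing all derivatives uniformly: for every $\varphi\in\mathcal{S}$ there exists $m$ such that, for all $l$, $\nu_{\varphi,l}(T\ast\rho_{\varepsilon})=\mathrm{o}(\varepsilon^{-m})$. Using Lemma \ref{GHTOMSemi} to produce suitable weights $\varphi$ dominating negative powers of $\langle x\rangle$, this should translate into the scale-invariant bound $\langle x\rangle^{-q}|\partial^{\alpha}(T\ast\rho_{\varepsilon})(x)|\leq C\varepsilon^{-m}$ with $q,m$ depending on $\varphi$ but not on $\alpha$, i.e.\ the $\mathcal{O}_{M}$-seminorms $p_{-q,l}(T\ast\rho_{\varepsilon})$ are all controlled by the same $\varepsilon^{-m}$.

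The key analytic step is then to pass from this uniform-in-$\varepsilon$, uniform-in-order moderate bound on the mollifications back to a genuine $\mathcal{O}_{M}$ bound on the limit $T$. Here I would argue that because $T\ast\rho_{\varepsilon}\to T$ in $\mathcal{S}^{\prime}$ as $\varepsilon\to0$, and because the single exponent $m$ controls \emph{all} derivatives simultaneously, one can first fix a well-chosen $\varepsilon$ (rather than let it tend to $0$) to obtain, for each multi-index $\alpha$ and appropriate $q$, a bound $p_{-q,|\alpha|}(T\ast\rho_{\varepsilon_0})<+\infty$; the uniformity in $l$ is exactly what guarantees that the \emph{smooth} function $T\ast\rho_{\varepsilon_0}$ has all the seminorms $p_{-q,l}$ finite, i.e.\ lies in $\mathcal{O}_{M}$. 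One then shows $T=T\ast\rho_{\varepsilon_0}$ as distributions, or more robustly that the family $(T\ast\rho_{\varepsilon})$ is bounded in $\mathcal{O}_{M}$ and its $\mathcal{S}^{\prime}$-limit $T$ inherits smoothness and the slow-growth estimates, so $T\in\mathcal{O}_{M}(\mathbb{R}^{d})$. This is the analogue of the classical argument for $\mathcal{G}^{\infty}\cap\mathcal{D}^{\prime}=\mathrm{C}^{\infty}$ found in \cite{Ober1}, adapted to the weighted seminorms $p_{-q,l}$ and the tempered setting.

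The main obstacle I anticipate is precisely the last passage: converting the uniform moderate estimates on the mollified net into honest regularity and slow-growth bounds on $T$ itself, i.e.\ extracting a smooth representative whose \emph{all} derivatives satisfy polynomial bounds from the fact that one exponent $m$ serves every differentiation order. The delicate point is the interchange between the limit $\varepsilon\to0$ and the supremum over arbitrarily high derivative order $l$; one must avoid a diagonal blow-up of constants. I expect this to be handled, as in the classical case, by noting that the negligibility of the difference $\iota_{\tau}(T)-\sigma_{\tau}(g)$ for the candidate limit $g$ can be checked on $\nu_{\varphi,0}$ alone, thanks to Proposition \ref{GHTTempNstar} (which lets one dispense with derivative estimates on negligible nets), thereby decoupling the order $l$ from the limiting procedure and closing the argument.
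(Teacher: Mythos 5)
Your easy inclusion is fine (and matches what the paper takes for granted via the diagram (\ref{GOMNelleDiag1})), but the hard direction has a genuine gap at exactly the step you flag as delicate, and neither of your two proposed ways of closing it works. The variant ``fix a well-chosen $\varepsilon_{0}$, show $T\ast\rho_{\varepsilon_{0}}\in\mathcal{O}_{M}$, then show $T=T\ast\rho_{\varepsilon_{0}}$'' fails twice over: the first half is vacuous, since $T\ast\psi\in\mathcal{O}_{M}(\mathbb{R}^{d})$ for \emph{every} $T\in\mathcal{S}^{\prime}(\mathbb{R}^{d})$ and $\psi\in\mathcal{S}(\mathbb{R}^{d})$, with no regularity hypothesis at all; and the second half is false, since $T\ast\rho_{\varepsilon_{0}}\neq T$ for fixed $\varepsilon_{0}$ in general (already for $T=e^{\imath ax}\in\mathcal{O}_{M}$ one gets $T\ast\rho_{\varepsilon_{0}}=\widehat{\rho}(\varepsilon_{0}a)\,e^{\imath ax}$). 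If this route worked, it would prove that every tempered distribution lies in $\mathcal{O}_{M}$. The ``more robust'' variant --- boundedness of the family $(T\ast\rho_{\varepsilon})_{\varepsilon}$ in $\mathcal{O}_{M}$ --- is precisely what cannot be read off the hypothesis: $\mathcal{G}_{\tau}^{\infty}$ membership only gives $\nu_{\varphi,l}(T\ast\rho_{\varepsilon})=\mathrm{o}(\varepsilon^{-m})$, bounds that diverge as $\varepsilon\rightarrow0$, and upgrading divergent bounds to $\varepsilon$-uniform ones \emph{is} the theorem. Proposition \ref{GHTTempNstar} does not supply the missing mechanism: it lets one check negligibility of a given net without derivative estimates, but it neither produces a candidate $g\in\mathcal{O}_{M}$ nor converts moderate bounds into uniform ones. (A secondary issue: your translation from the $\nu_{\varphi,l}$ seminorms to the weights $p_{-q,l}$ with the uniformity in $l$ preserved is not covered by Proposition \ref{GOMNllePPP} as stated and would itself need proof.)

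The idea you are missing --- and the heart of the paper's proof --- is to apply the moderate estimates not to the $\varepsilon$-dependent net but to an $\varepsilon$-\emph{independent} quantity, and then optimize the choice of $\varepsilon$ pointwise. The paper uses the characterization $u\in\mathcal{O}_{M}(\mathbb{R}^{d})\Leftrightarrow\widehat{u}\ast\psi\in\mathcal{S}(\mathbb{R}^{d})$ for all $\psi\in\mathcal{S}(\mathbb{R}^{d})$, and splits
\[
\widehat{u}\ast\psi=(\widehat{u}\,(1-\widehat{\rho_{\varepsilon}}))\ast\psi+(\widehat{u}\,\widehat{\rho_{\varepsilon}})\ast\psi ,
\]
where the first term is $\mathrm{O}(\varepsilon\left\langle x\right\rangle^{q})$ (structure theorem in $\mathcal{S}^{\prime}$ plus the moment conditions on $\rho$), and the second term is $\mathrm{O}(\left\langle x\right\rangle^{-l}\varepsilon^{-N})$ with $N$ \emph{independent of} $l$ --- this is exactly where the $\mathcal{G}_{\tau}^{\infty}$ hypothesis enters, because $\left\langle x\right\rangle^{l}$ factors are traded for derivatives $\partial^{\beta}(u\ast\rho_{\varepsilon})$ under the Fourier transform. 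Since the left-hand side $(\widehat{u}\ast\psi)(x)$ does not depend on $\varepsilon$, one may then choose $\varepsilon=\varepsilon_{x}=\left\langle x\right\rangle^{-(m+q)}$ for each $x$, making both error terms $\mathrm{O}(1)$ and giving $\left\langle x\right\rangle^{m}\left\vert(\widehat{u}\ast\psi)(x)\right\vert\leq C$ for every $m$. Your plan never identifies such an $\varepsilon$-independent quantity, so this balancing step --- which is also the engine of the classical proof of $\mathcal{G}^{\infty}\cap\mathcal{D}^{\prime}=\mathrm{C}^{\infty}$ that you invoke --- is unavailable, and the circularity you correctly anticipated (needing regularity of $T$ to control $T-T\ast\rho_{\varepsilon}$ in the relevant norms) is never broken.
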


The result should be understood as follows.\ For $u\in\mathcal{S}^{\prime
}(\mathbb{R}^{d})$, if $\iota_{\tau}\left(  u\right)  $ is in $\mathcal{G}%
_{\tau}^{\infty}(\mathbb{R}^{d})$, then $u$ is in $\mathcal{O}_{M}%
(\mathbb{R}^{d})$.\medskip

\begin{proof}
Take $u\in\mathcal{S}^{\prime}(\mathbb{R}^{d})$ such that $\iota_{\tau}\left(
u\right)  $ is in $\mathcal{G}_{\tau}^{\infty}(\mathbb{R}^{d})$.\ Then
$\left(  u\ast\rho_{\varepsilon}\right)  _{\varepsilon}$ is in $\mathcal{M}%
_{\mathcal{O}_{M}}^{\infty}(\mathbb{R}^{d})$. Recall that%
\[
u\in\mathcal{O}_{M}(\mathbb{R}^{d})\Leftrightarrow\widehat{u}\in
\mathcal{O}_{C}^{\prime}(\mathbb{R}^{d})\Leftrightarrow\forall\psi
\in\mathcal{S}(\mathbb{R}^{d}),\ \widehat{u}\ast\psi\in\mathcal{S}%
(\mathbb{R}^{d}).
\]
Thus consider $\psi\in\mathcal{S}(\mathbb{R}^{d})$. We are going to show that
$\left\langle \cdot\right\rangle ^{m}\widehat{u}\ast\psi\left(  \cdot\right)
$ is bounded for all $m\in\mathbb{N}$. We have
\begin{equation}
\widehat{u}\ast\psi=(\widehat{u}(1-\widehat{\rho_{\varepsilon}}))\ast
\psi+(\widehat{u}\,\widehat{\rho_{\varepsilon}})\ast\psi. \label{GOMnelleWXB0}%
\end{equation}
Recalling that $\rho_{\varepsilon}=\varepsilon^{-d}\rho\left(  \cdot
/\varepsilon\right)  $, we easily get that $\widehat{\rho_{\varepsilon}%
}\left(  \cdot\right)  =\widehat{\rho}\left(  \varepsilon\cdot\right)
$.\ Note also that $\widehat{\rho}\left(  0\right)  =\int\rho\left(  x\right)
\,\mathrm{d}x=1$.\ Thus%
\[
1-\widehat{\rho_{\varepsilon}}\left(  x\right)  =-\varepsilon\int_{0}%
^{1}\nabla\widehat{\rho}\left(  \varepsilon xt\right)  \cdot x\,\mathrm{d}%
t=\varepsilon B\left(  \varepsilon,x\right)  .
\]
As $\widehat{\rho}$ is rapidly decreasing, there exists $C>0$ such that
$\left\vert \nabla\widehat{\rho}\left(  \varepsilon xt\right)  \cdot
x\right\vert \leq C\left\langle x\right\rangle $ for all $\left(
\varepsilon,x,t\right)  \in\left(  0,1\right]  \times\mathbb{R}^{d}%
\times\left[  0,1\right]  $. The same holds for the derivatives with respect
to $x$ and, thus, for the function $B$ and its derivatives. From this, for
example by using the structure of elements of $\mathcal{S}^{\prime}%
(\mathbb{R}^{d})$, it can be shown that $\widehat{u}(1-\widehat{\rho
_{\varepsilon}}))\ast\psi$ satisfies%
\begin{equation}
\forall x\in\mathbb{R}^{d},\ \ \left\vert ((\widehat{u}(1-\widehat
{\rho_{\varepsilon}}))\ast\psi)(x)\right\vert \leq C_{0}\varepsilon
\,\left\langle x\right\rangle ^{q}, \label{GOMnelleWXB1}%
\end{equation}
for some $C_{0}>0$ and $q$ not depending on $\varepsilon$.\smallskip

Consider $l\in\mathbb{N}$ and $\beta\in\mathbb{N}^{d}$ with $\left\vert
\beta\right\vert =l$. We have, for all $x\in\mathbb{R}^{d},$
\[
\left(  \imath x\right)  ^{\beta}((\widehat{u}\,\widehat{\rho_{\varepsilon}%
})\ast\psi)(x)=\left(  \imath x\right)  ^{\beta}\mathcal{F}\left(  \left(
u\ast\rho_{\varepsilon}\right)  \mathcal{F}^{-1}\left(  \psi\right)  \right)
(x)=\mathcal{F}\left(  (\partial^{\beta}\left(  u\ast\rho_{\varepsilon
}\right)  )\,\mathcal{F}^{-1}\left(  \psi\right)  \right)  (x).
\]
Applying the definition of $\mathcal{M}_{\mathcal{O}_{M}}^{\infty}%
(\mathbb{R}^{d})$ for $\left(  \rho_{\varepsilon}\ast u\right)  _{\varepsilon
}$ with $\varphi=\left\langle \cdot\right\rangle ^{\left(  d+1\right)
/2}\mathcal{F}^{-1}\left(  \psi\right)  $, we get the existence of $N$ (only
depending on $\left(  \rho_{\varepsilon}\ast u\right)  _{\varepsilon}$ and
$\psi$) and $C_{1}>0$ such that
\[
\forall y\in\mathbb{R}^{d},\ \ \left\vert (\partial^{\beta}\left(  u\ast
\rho_{\varepsilon}\right)  )(y)\mathcal{F}^{-1}\left(  \psi\right)
(y)\right\vert \leq C_{1}\left\langle y\right\rangle ^{-\left(  d+1\right)
/2}\varepsilon^{-N}\ \ \text{for }\varepsilon\text{ small enough.}%
\]
Thus, we get the existence of $C_{2}>0$ such that
\[
\left\vert \left(  \imath x\right)  ^{\beta}((\widehat{u}\,\widehat
{\rho_{\varepsilon}})\ast\psi)(x)\right\vert =\left\vert \mathcal{F}\left(
(\partial^{\beta}\left(  u\ast\rho_{\varepsilon}\right)  )\,\mathcal{F}%
^{-1}\left(  \psi\right)  \right)  \left(  x\right)  \right\vert \leq
C_{2}\,\varepsilon^{-N}%
\]
for $\varepsilon$ small enough and all $x\in\mathbb{R}^{d}$. Using a classical
argument, we get a constant $C_{3}>0$ such that
\[
\forall x\in\mathbb{R}^{d},\ \ \left\langle x\right\rangle ^{l}\left\vert
(\widehat{u}\,\widehat{\rho_{\varepsilon}})\ast\psi)\left(  x\right)
\right\vert \leq C_{3}\,\varepsilon^{-N}\text{ \ for }\varepsilon\text{ small
enough.}\
\]

\noindent Fix $m\in\mathbb{N}$ and take $l=m+\left(  m+q\right)  N$. Writing
the previous inequality in the form $\left\langle x\right\rangle
^{m}\,\left\vert (\widehat{u}\,\widehat{\rho_{\varepsilon}})\ast\psi)\left(
x\right)  \right\vert \leq C_{3}\,\left(  \varepsilon\left\langle
x\right\rangle ^{m+q}\right)  ^{-N}$, using (\ref{GOMnelleWXB1}) and finally
inserting these intermediates steps in (\ref{GOMnelleWXB0}), we get
\[
\forall x\in\mathbb{R}^{d},\ \ \left\vert \left\langle x\right\rangle
^{m}\left(  \widehat{u}\ast\psi\right)  \left(  x\right)  \right\vert
=C\,(\varepsilon\left\langle x\right\rangle ^{m+q}+\,\left(  \varepsilon
\left\langle x\right\rangle ^{m+q}\right)  ^{-N})=T\left(  \varepsilon
\left\langle x\right\rangle ^{m+q}\right)
\]
for $\varepsilon$ smaller than some $\varepsilon_{0}$ and some $C>0.$Thus, for
$x$ such that $\left\langle x\right\rangle ^{m+q}\geq\varepsilon_{0}^{-1}$,
take $\varepsilon_{x}$ such that $\varepsilon_{x}=\left\langle x\right\rangle
^{-m-q}$ to obtain that $\ \left\vert \left\langle x\right\rangle ^{m}\left(
\widehat{u}\ast\psi\right)  \left(  x\right)  \right\vert \leq T\left(
1\right)  $. From this, it follows that the function $\left\vert \left\langle
\cdot\right\rangle ^{m}\left(  \widehat{u}\ast\psi\right)  \right\vert $ is
bounded on $\mathbb{R}^{d}$, as claimed.
\end{proof}

\subsection{Regularities for temperate generalized functions}

As in the presheaf $\mathcal{G}_{\tau}\left(  \cdot\right)  $ the localization
principle $\left(  F_{1}\right)  $ is not fulfilled, we are not in the
situation to apply the results of \cite{JAM4LocAnal} concerning singular
supports and their properties. Indeed, following the notations of the quoted
paper, we need a presheaf $\mathcal{A}\left(  \cdot\right)  $ (of vector
spaces, of algebras,\ldots) with localization principle and a subpresheaf
$\mathcal{B}\left(  \cdot\right)  $ of $\mathcal{A}\left(  \cdot\right)  $ to
define the $\mathcal{B}$-singular support of a section $u\in\mathcal{A}\left(
\Omega\right)  $. Thus, as it is done in \cite{JAM3} for the definition of the
presheaf $\mathcal{G}^{L}\left(  \cdot\right)  $, we shall start from the
sheaf $\mathcal{G}\left(  \cdot\right)  $ and define some regular
subpresheaves of it. More precisely, for the two cases $\mathcal{B}\left(
\cdot\right)  =\mathcal{G}_{\tau}\left(  \cdot\right)  $, $\mathcal{G}_{\tau
}^{\infty}\left(  \cdot\right)  $, we set%
\[
\mathcal{N}_{\mathcal{O}_{M},\ast}^{\sharp}\left(  \cdot\right)
=\mathcal{N}\left(  \cdot\right)  \cap\mathcal{M}_{\mathcal{O}_{M}}^{\sharp
}\left(  \cdot\right)  ,
\]
where the symbol $"\sharp"$ means successively the blank character and
$"\infty"$. According to the results recalled in Section \ref{DSHCsec2} and to
the inclusion $\mathcal{M}_{\mathcal{O}_{M}}(\mathbb{R}^{d})\subset
\mathcal{M}_{\mathrm{C}^{\infty}}\left(  \mathbb{R}^{d}\right)  $),
$\mathcal{G}_{\tau,\ast}^{\sharp}\left(  \cdot\right)  =\mathcal{M}%
_{\mathcal{O}_{M}}^{\sharp}\left(  \cdot\right)  /\mathcal{N}_{\mathcal{O}%
_{M},\ast}^{\sharp}\left(  \cdot\right)  $ is a subpresheaf of $\mathcal{G}%
\left(  \cdot\right)  $.\ Using the framework and the results of
\cite{JAM4LocAnal}, we say that the elements of $\mathcal{G}_{\tau,\ast
}^{\sharp}\left(  \Omega\right)  $ are $\mathcal{G}_{\tau}^{\sharp}%
$\emph{-regular elements} of $\mathcal{G}\left(  \Omega\right)  $. For
$u\in\mathcal{G}(\Omega)$, we can define $\mathcal{O}_{\tau}^{\sharp}\left(
u\right)  $, the set of all $x\in\Omega$ such that $u$ is $\mathcal{G}_{\tau
}^{\sharp}$\emph{-regular} at $x$, that is%
\[
\mathcal{O}_{\tau}^{\sharp}\left(  u\right)  =\left\{  x\in\Omega,\ \exists
V\in\mathcal{V}_{x}:u\left\vert _{V}\right.  \in\mathcal{G}_{\mathcal{O}%
_{M,\ast}}^{\sharp}\left(  V\right)  \right\}
\]
($\mathcal{V}_{x}$ being the family of all the open neighborhood of $x$.) The
$\mathcal{G}_{\tau}^{\sharp}$-\emph{singular support} of $u$ is the well
defined set $\mathcal{S}_{\tau}^{\sharp}\left(  u\right)
=\operatorname{singsupp}_{\mathcal{G}_{\tau}^{\sharp}}u=\Omega\setminus
\mathcal{O}_{\tau}^{\sharp}\left(  u\right)  $ and has the following
properties \cite{JAM4LocAnal}:

\begin{proposition}
\label{GOMNvllePropSSup1}Consider $u,v\in$ $\mathcal{G}\left(  \Omega\right)
$, $\alpha$ in $\mathbb{N}^{d}$ and $g$ in $\mathcal{G}_{\tau,\ast}^{\sharp
}(\Omega)$.\ We have:\newline(i)~$\mathcal{S}_{\tau}^{\sharp}(u+v)\subset
\mathcal{S}_{\tau}^{\sharp}\left(  u\right)  \cup\mathcal{S}_{\tau}^{\sharp
}(v)\,$;\newline(ii)~$\mathcal{S}_{\tau}^{\sharp}(uv)\subset\mathcal{S}_{\tau
}^{\sharp}(u)\cup\mathcal{S}_{\tau}^{\sharp}(v)\,$;\newline(iii)~$\mathcal{S}%
_{\tau}^{\sharp}\left(  \partial^{\alpha}u\right)  \subset\mathcal{S}_{\tau
}^{\sharp}\left(  u\right)  \,$;\newline(iv)~$\mathcal{S}_{\tau}^{\sharp
}\left(  gu\right)  \subset\mathcal{S}_{\tau}^{\sharp}\left(  u\right)  $.
\end{proposition}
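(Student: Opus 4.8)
The plan is to rephrase every inclusion in terms of the open regularity sets $\mathcal{O}_{\tau}^{\sharp}(\cdot)$ by passing to complements, and then to reduce each resulting local statement to two structural facts that are already available: that $\mathcal{G}(\cdot)$ satisfies the localization principle $(F_{1})$ (Proposition \ref{GOMNELLEP2}, since $\mathrm{C}^{\infty}$ is a fine sheaf), so that the $\mathcal{G}_{\tau}^{\sharp}$-singular support is well defined, and that $\mathcal{G}_{\tau,\ast}^{\sharp}(\cdot)$ is a subpresheaf of differential algebras of $\mathcal{G}(\cdot)$. Writing $O=\mathcal{O}_{\tau}^{\sharp}$ and $S=\mathcal{S}_{\tau}^{\sharp}=\Omega\setminus O$, each inclusion $S(w)\subset S(u)\cup S(v)$ is equivalent to $O(u)\cap O(v)\subset O(w)$, so the content is simply that $w$ inherits regularity at any point where its constituents are regular.

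For (i) and (ii) I would fix $x\in O(u)\cap O(v)$ and choose open neighbourhoods $V_{1},V_{2}\in\mathcal{V}_{x}$ with $u|_{V_{1}}\in\mathcal{G}_{\tau,\ast}^{\sharp}(V_{1})$ and $v|_{V_{2}}\in\mathcal{G}_{\tau,\ast}^{\sharp}(V_{2})$. Setting $V=V_{1}\cap V_{2}$, the presheaf restriction property gives $u|_{V},v|_{V}\in\mathcal{G}_{\tau,\ast}^{\sharp}(V)$. Since $\mathcal{G}_{\tau,\ast}^{\sharp}(V)$ is a subalgebra (hence a vector subspace) of $\mathcal{G}(V)$ and restriction is an algebra morphism, both $(u+v)|_{V}=u|_{V}+v|_{V}$ and $(uv)|_{V}=u|_{V}\,v|_{V}$ lie in $\mathcal{G}_{\tau,\ast}^{\sharp}(V)$. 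Hence $x\in O(u+v)$ and $x\in O(uv)$, which yields (i) and (ii) after taking complements.

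For (iii) and (iv) it suffices to prove the one-sided inclusions $O(u)\subset O(\partial^{\alpha}u)$ and $O(u)\subset O(gu)$. Given $x\in O(u)$ with $u|_{V}\in\mathcal{G}_{\tau,\ast}^{\sharp}(V)$, I would use that differentiation commutes with restriction and that $\mathcal{G}_{\tau,\ast}^{\sharp}(\cdot)$ is stable under $\partial^{\alpha}$ to get $(\partial^{\alpha}u)|_{V}=\partial^{\alpha}(u|_{V})\in\mathcal{G}_{\tau,\ast}^{\sharp}(V)$, so $x\in O(\partial^{\alpha}u)$. For (iv), since $g\in\mathcal{G}_{\tau,\ast}^{\sharp}(\Omega)$ and $\mathcal{G}_{\tau,\ast}^{\sharp}(\cdot)$ is a presheaf, $g|_{V}\in\mathcal{G}_{\tau,\ast}^{\sharp}(V)$; multiplying inside the subalgebra gives $(gu)|_{V}=g|_{V}\,u|_{V}\in\mathcal{G}_{\tau,\ast}^{\sharp}(V)$, whence $x\in O(gu)$.

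The restriction and stability checks above are routine; the one genuine point is that $\mathcal{S}_{\tau}^{\sharp}(u)$ is a well-defined closed set, equivalently that $\mathcal{O}_{\tau}^{\sharp}(u)$ is open and that $\mathcal{G}_{\tau}^{\sharp}$-regularity is a local property, namely that if $u$ is regular in a neighbourhood of each point of an open set $W$ then $u|_{W}\in\mathcal{G}_{\tau,\ast}^{\sharp}(W)$. This gluing is exactly what the localization principle for $\mathcal{G}(\cdot)$ supplies, and it is precisely the reason the whole regularity theory is set up inside $\mathcal{G}(\cdot)$ rather than inside $\mathcal{G}_{\tau}(\cdot)$, for which $(F_{1})$ fails (Example \ref{GOMnelleGOMLocP-Exa}). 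Once this abstract framework of \cite{JAM4LocAnal} is in place, assertions (i)--(iv) are its formal consequences for the differential subpresheaf $\mathcal{B}(\cdot)=\mathcal{G}_{\tau,\ast}^{\sharp}(\cdot)$.
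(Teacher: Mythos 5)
Your treatment of (i)--(iv) is correct and is, in substance, the argument the paper itself appeals to: the paper gives no proof of its own, citing the framework of \cite{JAM4LocAnal}, in which these inclusions are exactly the formal consequences you derive from $\mathcal{G}_{\tau,\ast}^{\sharp}(\cdot)$ being a subpresheaf of differential algebras of $\mathcal{G}(\cdot)$ --- pass to complements, intersect the two neighbourhoods, and use stability of the subalgebra under sums, products, $\partial^{\alpha}$, multiplication by $g|_{V}$, and restriction.

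Your closing paragraph, however, misstates what you call ``the one genuine point''. The localization principle $(F_{1})$ is a separation (uniqueness) property --- two sections of $\mathcal{G}(\cdot)$ agreeing on all members of a cover coincide --- not a gluing property, so it cannot supply the claim that if $u$ is $\mathcal{G}_{\tau}^{\sharp}$-regular near every point of an open set $W$ then $u|_{W}\in\mathcal{G}_{\tau,\ast}^{\sharp}(W)$. Worse, that claim is false: $\mathcal{O}_{M}$-type moderateness is a sup-estimate over the whole set, and locally $\mathcal{O}_{M}$-moderate representatives need not patch into a single globally $\mathcal{O}_{M}$-moderate one. For instance, a net behaving like $\varepsilon^{-\left\vert x\right\vert }$ is $\mathcal{G}_{\tau,\ast}$-regular in a neighbourhood of each point of $\mathbb{R}^{d}$, yet no $\mathrm{C}^{\infty}$-negligible modification of it is $\mathcal{O}_{M}$-moderate on all of $\mathbb{R}^{d}$; this non-locality of temperateness is the same phenomenon exhibited in Example \ref{GOMnelleGOMLocP-Exa}. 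Fortunately nothing in the proposition requires gluing: $\mathcal{S}_{\tau}^{\sharp}(u)$ is well defined and closed merely because $\mathcal{O}_{\tau}^{\sharp}(u)$ is open, and openness is immediate from the definition --- if $u|_{V}\in\mathcal{G}_{\tau,\ast}^{\sharp}(V)$, then this single $V$ witnesses regularity at every $y\in V$. The actual role of $(F_{1})$ in the framework of \cite{JAM4LocAnal} is to make local analysis in the \emph{ambient} presheaf meaningful (e.g.\ a section vanishing near every point vanishes), which is why the theory is set inside $\mathcal{G}(\cdot)$ rather than $\mathcal{G}_{\tau}(\cdot)$. With that final paragraph deleted or corrected, your proof stands as written.
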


From these properties, one easily gets:

\begin{corollary}
\label{GOMNvllePropSSup4}\textit{Let }$P(\partial)=%
%TCIMACRO{\dsum \limits_{\left\vert \alpha\right\vert \leq m}}%
%BeginExpansion
{\displaystyle\sum\limits_{\left\vert \alpha\right\vert \leq m}}
%EndExpansion
C_{\alpha}\partial^{\alpha}$\textit{\ be a differential polynomial with
coefficients in }$\mathcal{G}_{\tau,\ast}(\Omega)$ (resp. $\mathcal{G}%
_{\tau,\ast}^{\infty}(\Omega)$)$.$ \textit{For any }$u\in\mathcal{G}(\Omega
)$\textit{, we have}%
\[
\mathcal{S}_{\tau}\left(  P(\partial)u\right)  \subset\mathcal{S}_{\tau
}\left(  u\right)  \ (\mathrm{resp.}\ \mathcal{S}_{\tau}^{\infty}\left(
P(\partial)u\right)  \subset\mathcal{S}_{\tau}^{\infty}\left(  u\right)  \,).
\]

\end{corollary}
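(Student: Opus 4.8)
The plan is to decompose $P(\partial)u$ into its individual terms and then invoke the four stability properties established in Proposition~\ref{GOMNvllePropSSup1}, handling both cases (where the symbol $\sharp$ is the blank character and where $\sharp=\infty$) simultaneously through the $\sharp$ notation. Write $P(\partial)u=\sum_{\left\vert \alpha\right\vert \leq m}C_{\alpha}\,\partial^{\alpha}u$. I first would bound the singular support of each summand $C_{\alpha}\partial^{\alpha}u$ separately, and then recombine the finite sum.

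For a fixed multi-index $\alpha$ with $\left\vert \alpha\right\vert \leq m$, property~(iii) gives $\mathcal{S}_{\tau}^{\sharp}(\partial^{\alpha}u)\subset\mathcal{S}_{\tau}^{\sharp}(u)$. By hypothesis the coefficient $C_{\alpha}$ belongs to $\mathcal{G}_{\tau,\ast}^{\sharp}(\Omega)$, so property~(iv), applied with $g=C_{\alpha}$ and with $\partial^{\alpha}u$ playing the role of $u$, yields $\mathcal{S}_{\tau}^{\sharp}(C_{\alpha}\partial^{\alpha}u)\subset\mathcal{S}_{\tau}^{\sharp}(\partial^{\alpha}u)$. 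Chaining these two inclusions produces the per-term estimate $\mathcal{S}_{\tau}^{\sharp}(C_{\alpha}\partial^{\alpha}u)\subset\mathcal{S}_{\tau}^{\sharp}(u)$, valid for every $\alpha$ in the index set.

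It then remains to pass from the individual terms to their sum. Since the index set $\{\alpha:\left\vert \alpha\right\vert \leq m\}$ is finite, I would apply property~(i) iteratively (a trivial induction on the number of summands, the base step being exactly the two-term case of~(i)) to obtain $\mathcal{S}_{\tau}^{\sharp}(P(\partial)u)\subset\bigcup_{\left\vert \alpha\right\vert \leq m}\mathcal{S}_{\tau}^{\sharp}(C_{\alpha}\partial^{\alpha}u)$. Combining this with the per-term bound of the previous paragraph gives $\mathcal{S}_{\tau}^{\sharp}(P(\partial)u)\subset\mathcal{S}_{\tau}^{\sharp}(u)$, which is precisely the asserted inclusion in both the $\mathcal{S}_{\tau}$ and the $\mathcal{S}_{\tau}^{\infty}$ versions.

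Since each step is a direct appeal to Proposition~\ref{GOMNvllePropSSup1}, I expect no substantive difficulty. The only point deserving a word of care is the finiteness of the sum defining $P(\partial)$: it is what guarantees that the iterated use of~(i) terminates, and indeed such an inclusion could fail for an infinite family of summands, whose singular supports need not aggregate into the union of the individual ones.
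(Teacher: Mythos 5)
Your proof is correct and is exactly the argument the paper intends: the paper gives no explicit proof beyond noting the corollary follows ``from these properties,'' i.e.\ from Proposition~\ref{GOMNvllePropSSup1}, and your chaining of (iii) and (iv) term by term followed by a finite induction on (i) is precisely that routine verification. Your closing remark on the necessity of finiteness of the sum is a sensible observation and does not conflict with anything in the paper.
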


~

\end{document}